\title{Persistence of sub-chain groups}
\author{Fang Sun, Shengwen Xie, Xuezhi Zhao}
\date{September 2021}
\newtheorem{theorem}{Theorem}
\newtheorem{prop}[theorem]{Proposition}
\newtheorem{remark}[theorem]{Remark}
\newtheorem{lemma}[theorem]{Lemma}
\newtheorem{definition}[theorem]{Definition}
\newtheorem{corollary}[theorem]{Corollary}
\newtheorem{exmp}[theorem]{Example}
\newenvironment{sproof}{%
  \proof}{\endproof}
\numberwithin{theorem}{section}
\begin{document}

\maketitle
\section{abstract}
In this work, we present a generalization of extended persistent homology  to filtrations of graded sub-groups by defining relative homology in this setting. Our work provides a more comprehensive and flexible approach to get an  algebraic invariant overcoming the limitations of the standard approach. The main contribution of our work is the development of a stability theorem for extended persistence modules using an extension of the definition of interleaving and the rectangle measure. This stability theorem is a crucial property for the application of mathematical tools in data analysis. We apply the stability theorem to extended persistence modules obtained from extended path homology of directed graphs and extended homology of hypergraphs, which are two important examples in topological data analysis. 
\section{Introduction}

The study of graded subgroups and their homology is motivated by at least two subjects in TDA: path homology of directed graphs and homology of hypergraphs.

Path homology is a novel algebraic invariant for directed graphs (digraphs) developed by Grigoryan, Lin, Muranov and Yau (c.f. \cite{grigor2020path}). It is capable of extracting higher dimensional features from a graph, while taking direction into account. See \cite{Chowdhury18} p.1154 Section 2.4 for an example of two directed graphs whose underlying undirected graphs are identical having distinct path homology groups. In addition, path homology satisfy certain functorial properties one would expect from a "natural" homology theory on digraphs. Namely, it is natural with respect to morphisms of digraphs and satisfy a K\"unneth Formula for product of digraphs. There is even a homotopy theory on digraphs that is compatible with path homology. This makes path homology an ideal feature for studying digraphs, assuming an efficient algorithm is available.

\section{Preliminaries}
In what follows, we fix a field $k$, and assume all vector spaces to be over $k$, and all maps between vector spaces to be $k$-linear.
\subsection{Filtration and Basis}
\subsubsection{Compatible Basis}
Suppose we are given a $k$ vector space $C$ and a filtration
\begin{equation*}
    D^1 \subseteq D^2 \subseteq \cdots \subseteq D^N \subseteq C
\end{equation*}
of subspaces (we do not assume this filtration to be exhaustive, i.e., $D^N = C$). A \textit{compatible basis} of the filtration $\{D^i\}_{1 \leq i \leq N}$ is an ordered basis $\{e^1,e^2,\cdots,e^m\}$ of $D^N$ such that there exists integers $ 1 \leq i_1 \leq i_2 \cdots \leq i_N = m$ so that for each $1 \leq j \leq N$, $D^j$ is spanned by the first $i_j$ members of $e^1,e^2,\cdots,e^m$, i.e., $D^j = span\{e^1,e^2,\cdots, e^{i_j} \}$. 

An \textit{upper unitriangular} matrix is an upper triangular matrix whose entries on the diagonal are all 1's.  A basis change between ordered basis of finite dimensional vector spaces is called a \textit{left-to-right} addition if its transition matrix is upper unitriangular. The following is an easy observation:
\begin{prop}
Let $\{e^1,e^2,\cdots,e^m \}$ be a compatible basis of the filtration $\{D^i\}_{1 \leq i \leq N}$ on $C$. If $\{\tilde{e}^1,\tilde{e}^2,\cdots,\tilde{e}^m \}$ is obtained from  $\{e^1,e^2,\cdots,e^m \}$ by a
left-to-right addition, then $\{\tilde{e}^1,\tilde{e}^2,\cdots,\tilde{e}^m \}$ is also compatible.
\end{prop}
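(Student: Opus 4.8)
The plan is to verify directly that the \emph{same} integers $1 \leq i_1 \leq \cdots \leq i_N = m$ witness compatibility for the new basis, i.e.\ that $D^j = \mathrm{span}\{\tilde{e}^1,\dots,\tilde{e}^{i_j}\}$ for every $1 \leq j \leq N$. Write $T = (T_{pq})$ for the transition matrix, so that $\tilde{e}^q = \sum_{p} T_{pq}\, e^p$ with $T$ upper unitriangular; thus $T_{pq} = 0$ whenever $p > q$, and $T_{qq} = 1$.

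First I would note that for each $\ell$ we have $\tilde{e}^\ell = e^\ell + \sum_{p < \ell} T_{p\ell}\, e^p \in \mathrm{span}\{e^1,\dots,e^\ell\}$. Hence for every $\ell \leq i_j$ we get $\tilde{e}^\ell \in \mathrm{span}\{e^1,\dots,e^{i_j}\} = D^j$, which already gives the inclusion $\mathrm{span}\{\tilde{e}^1,\dots,\tilde{e}^{i_j}\} \subseteq D^j$. For the reverse inclusion the key observation is that the leading $i_j \times i_j$ block $T'$ of $T$ is again upper unitriangular, hence invertible (its determinant is $1$), and that by triangularity each of $\tilde{e}^1,\dots,\tilde{e}^{i_j}$ is a linear combination of $e^1,\dots,e^{i_j}$ alone; therefore $(\tilde{e}^1,\dots,\tilde{e}^{i_j}) = (e^1,\dots,e^{i_j})\,T'$ and the two tuples span the same subspace, namely $D^j$. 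Alternatively, since $\{\tilde{e}^1,\dots,\tilde{e}^m\}$ is a basis of $D^N$, the space $\mathrm{span}\{\tilde{e}^1,\dots,\tilde{e}^{i_j}\}$ has dimension exactly $i_j$ and sits inside the $i_j$-dimensional space $D^j$, forcing equality — this dimension count sidesteps the matrix bookkeeping entirely.

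I do not expect a genuine obstacle here; the statement is essentially the remark that a leading principal submatrix of an upper unitriangular matrix is again upper unitriangular, which is precisely what keeps the flag $D^1 \subseteq \cdots \subseteq D^N$ invariant under left-to-right additions. The only points requiring a little care are keeping straight the roles of rows versus columns in the transition matrix, and observing that this is exactly where unitriangularity is used: the analogous statement fails for a general invertible basis change, so ``adding only earlier vectors to later ones'' is the correct hypothesis.
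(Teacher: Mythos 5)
Your argument is correct and complete: showing the same witnessing indices $i_1,\dots,i_N$ work, via the triangular inclusion $\tilde{e}^\ell \in \mathrm{span}\{e^1,\dots,e^\ell\}$ and the dimension count (or the invertibility of the leading principal block), is exactly the right way to do it. The paper states this proposition without proof as an "easy observation," so there is nothing to compare against, but your write-up is the standard argument one would expect.
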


\subsubsection{Tail Position and Height}
Choose and fix a compatible basis $\{e^1,e^2,\cdots,e^m \}$ of $\{D^i\}_{1 \leq i \leq N}$. For any $x \in D^N$, define its \textit{tail position} by
\begin{equation*}
    tp(x)=
    \begin{cases}
    \max \{ 1 \leq i \leq m | x_i \neq 0  \} & \mbox{if } x= \sum_{i=1}^{m} x_i e^i \neq 0 \\
    0 & \mbox{if } x=0
    \end{cases}
\end{equation*}
If $x \in C - D^N$, $tp(x)$ is not defined. Suppose $\{\tilde{e}^1,\tilde{e}^2,\cdots,\tilde{e}^m \}$ is obtained from $\{e^1,e^2,\cdots,e^m \}$ by a left-to-right addition, then $tp(\tilde{e}^i)=i,1 \leq i \leq n $.

Given a filtration $\{D^i\}_{1 \leq i \leq N}$ on $C$, for any $x \in C$, define the its \textit{height} $ht(x)$ by
\begin{equation*}
    ht(x)=
    \begin{cases}
    \min\{ 1 \leq i \leq N | x \in D^i \} & \mbox{if } x \in D^N \\
    \infty & \mbox{otherwise}
    \end{cases}
\end{equation*}

For $x \in D^N$, its height can be read from the tail position with respect to a compatible basis, i.e., $ht(x) = ht(e^{tp(x)})$

In general, there is no guarantee that linear combinations preserve height. For left-to-right addition on compatible basis, however, we have the following proposition:
\begin{prop}
If $\{e^1,e^2,\cdots,e^m\}$ is a compatible basis of $\{D^i\}_{1 \leq i \leq N}$ and $\{\tilde{e}^1,\tilde{e}^2,\cdots,\tilde{e}^m \}$ is obtained from $\{e^1,e^2,\cdots,e^m \}$ by a
left-to-right addition, then we have $ht(\tilde{e}^i)=ht(e^i),1 \leq i \leq m$.
\end{prop}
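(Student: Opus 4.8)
The plan is to reduce the statement to two facts already recorded in the excerpt: that $ht(x) = ht(e^{tp(x)})$ for every $x \in D^N$, where $tp$ is computed in the fixed compatible basis $\{e^1, \ldots, e^m\}$; and that a left-to-right addition satisfies $tp(\tilde{e}^i) = i$ in that same basis.

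First I would unwind the hypothesis. Saying that $\{\tilde{e}^1, \ldots, \tilde{e}^m\}$ is obtained from $\{e^1, \ldots, e^m\}$ by a left-to-right addition means the transition matrix is upper unitriangular, i.e.
\begin{equation*}
\tilde{e}^i = e^i + \sum_{j < i} a_{ji}\, e^j, \qquad a_{ji} \in k .
\end{equation*}
In particular $\tilde{e}^i \in \mathrm{span}\{e^1, \ldots, e^i\} \subseteq D^N$, so both $ht(\tilde{e}^i)$ and $tp(\tilde{e}^i)$ are defined; and since the coefficient of $e^i$ equals $1$ while every coefficient of $e^j$ with $j > i$ vanishes, the tail position of $\tilde{e}^i$ with respect to $\{e^1, \ldots, e^m\}$ is exactly $i$.

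Then I would feed this into the height-from-tail-position identity applied to $x = \tilde{e}^i$, getting
\begin{equation*}
ht(\tilde{e}^i) = ht\!\left(e^{\,tp(\tilde{e}^i)}\right) = ht(e^i),
\end{equation*}
which is precisely the assertion.

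The one point needing attention — and the only place the argument could go wrong — is the bookkeeping of which basis the tail position refers to: in $ht(x) = ht(e^{tp(x)})$ the tail position is taken in the \emph{original} compatible basis $\{e^i\}$, and that is exactly the basis in which $tp(\tilde{e}^i) = i$ holds, so there is no circularity. If one prefers a self-contained argument avoiding the (somewhat informally stated) identity, one can argue directly from the defining property of a compatible basis: writing $h = ht(e^i)$, the subspace $D^{h} = \mathrm{span}\{e^1, \ldots, e^{i_{h}}\}$ with $i_{h} \geq i$ contains $e^1, \ldots, e^i$ and hence contains $\tilde{e}^i$, giving $ht(\tilde{e}^i) \leq h$; conversely, for $j < h$ one has $i_j < i$, so $D^j = \mathrm{span}\{e^1, \ldots, e^{i_j}\} \subseteq \mathrm{span}\{e^1, \ldots, e^{i-1}\}$, which cannot contain $\tilde{e}^i$ because the latter has nonzero $e^i$-coordinate; thus $ht(\tilde{e}^i) \geq h$, and equality follows.
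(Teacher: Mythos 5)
Your main argument is exactly the paper's one-line proof---it invokes $tp(\tilde{e}^i)=i$ together with the identity $ht(x)=ht(e^{tp(x)})$---just with the bookkeeping spelled out. The self-contained alternative you add at the end, arguing directly from $D^j=\mathrm{span}\{e^1,\ldots,e^{i_j}\}$, is also correct and is a nice way to avoid leaning on the informally stated height-from-tail identity.
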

\begin{proof}
This follows from $tp(\tilde{e}^i)=i=tp(e^i)$.
\end{proof}

\subsection{Graded Subgroups of a Chain Complex}\label{Graded Subgroups of a Chain Complex}
\subsubsection{Definition}
Let 
\begin{equation*}
    \cdots \xrightarrow{\partial_{n+1}} C_n \xrightarrow{\partial_{n}} C_{n-1} \xrightarrow{\partial_{n-1}} \cdots
\end{equation*}
be a chain complex of $k$-vector spaces. A \textit{graded subgroup} of $\{C_\ast, \partial_\ast \}$ is just a family of $\{ D_\ast \}$ of $k$-vector spaces such that $D_p \subset C_p$ for all $p$. If $\partial D_p \subset D_{p-1}$ then $\{ D_\ast \}$ would be a subcomplex, but we do not assume this in general.

To define a proper notion of homology for graded subgroups, ones has to construct a chain complex from $\{ D_\ast \}$. It seems reasonable to either enlarge $\{ D_\ast \}$ or excise it. As it turns out, both approaches produce the same homology groups. We now give a detailed treatment of these notions.

For each $p$, define $S_p =S_p(D_\ast ;C_\ast) = D_p + \partial_{p+1} D_{p+1}$ and $I_p = I_p(D_\ast ;C_\ast) = D_p \cap \partial^{-1}_p D_{p-1}$. Then $S_\ast$ (resp. $I_\ast$) is a subcomplex of $C_\ast$, and is called the \textit{supremum} (resp. \textit{infimum}) \textit{chain complex} of $\{ D_\ast \}$ in $\{ C_\ast, \partial _\ast \}$. 
\begin{remark} \label{RemarkGradedSubgroups}
 It can be shown (\cite{HypergraphHomology} Proposition 2.1, 2.2) that the $S_\ast$ is the minimal subcomplex containing $\{ D_\ast \}$, while $I_\ast$ is the maximal subcomplex contained in $\{ D_\ast \}$. In particular, replacing $C_\ast$ by a larger chain complex (containing it) does not affect $S_\ast$ or $I_\ast$.
\end{remark}

Define the homology of $S_\ast$ (resp. $I_\ast$) by $H_\ast^{\sup}(D_\ast; C_\ast)$ (resp.$H_\ast^{\inf}(D_\ast; C_\ast)$).
We have the following result (\cite{HypergraphHomology} Propposition 2.4):
\begin{prop}
The inclusion of $I_\ast \hookrightarrow S_\ast$  is a chain map that induce isomorphisms $H_\ast^{\inf}(D_\ast; C_\ast) \xrightarrow[]{} H_\ast^{\sup}(D_\ast; C_\ast)$. 
\end{prop}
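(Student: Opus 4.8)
The plan is to realize the inclusion as one third of a short exact sequence of chain complexes and to show that the cokernel complex is acyclic. That $I_\ast \hookrightarrow S_\ast$ is a chain map is immediate: both $I_\ast$ and $S_\ast$ are subcomplexes of $C_\ast$ (see Remark~\ref{RemarkGradedSubgroups}, or check directly that $\partial_p S_p \subseteq \partial_p D_p \subseteq S_{p-1}$ and $\partial_p I_p \subseteq D_{p-1}\cap\ker\partial_{p-1}\subseteq I_{p-1}$), so the inclusion $I_p\hookrightarrow S_p$ commutes with the restricted differentials. Since
\begin{equation*}
    I_p = D_p \cap \partial_p^{-1}D_{p-1} \subseteq D_p \subseteq D_p + \partial_{p+1}D_{p+1} = S_p ,
\end{equation*}
we obtain a short exact sequence of chain complexes $0 \to I_\ast \to S_\ast \to S_\ast/I_\ast \to 0$, and by the associated long exact sequence in homology it suffices to prove that $S_\ast/I_\ast$ is acyclic.

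To this end I would describe the cycles of the quotient explicitly. Fix $p$ and take $x \in S_p$, writing $x = d + \partial_{p+1}e$ with $d \in D_p$ and $e \in D_{p+1}$; then $\partial_p x = \partial_p d$, because $\partial_p\partial_{p+1}=0$. The key observation is that the class $\bar x \in S_p/I_p$ is a cycle in the quotient complex exactly when $d \in I_p$: indeed $\bar x$ is a cycle iff $\partial_p x \in I_{p-1}$, and since $\partial_{p-1}\partial_p x = 0 \in D_{p-2}$ the second defining condition of $I_{p-1}$ holds automatically, so this reduces to $\partial_p x = \partial_p d \in D_{p-1}$, which together with $d\in D_p$ says precisely $d \in D_p \cap \partial_p^{-1}D_{p-1} = I_p$. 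But then $\bar d = 0$ in $S_p/I_p$, hence $\bar x = \overline{\partial_{p+1}e} = \partial_{p+1}\bar e$, where $\bar e$ makes sense in $S_{p+1}/I_{p+1}$ because $e \in D_{p+1}\subseteq S_{p+1}$. Therefore every cycle of $S_\ast/I_\ast$ is a boundary, so $H_\ast(S_\ast/I_\ast)=0$, and the long exact sequence forces the map $H_\ast^{\inf}(D_\ast;C_\ast) \to H_\ast^{\sup}(D_\ast;C_\ast)$ to be an isomorphism.

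I do not expect a serious obstacle: once one commits to the quotient-complex strategy the argument is routine diagram-chasing. The one place that wants a little care is the identification of cycles in $S_\ast/I_\ast$, specifically remembering that membership in $I_{p-1}$ imposes only the condition $\partial_p x \in D_{p-1}$ because the other condition defining $I_{p-1}$ is satisfied vacuously by any element of $\operatorname{im}\partial_p$, and keeping the decomposition $x = d + \partial_{p+1}e$ used consistently. If one prefers to avoid quotient complexes entirely, the same computation can be repackaged as a direct verification that the induced map is injective and surjective on homology, chasing representatives through the definitions of $S_\ast$ and $I_\ast$.
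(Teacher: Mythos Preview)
Your argument is correct: the quotient complex $S_\ast/I_\ast$ is indeed acyclic by exactly the computation you give, and the long exact sequence then forces the inclusion to be a quasi-isomorphism. The only subtlety you flagged---that the second defining condition of $I_{p-1}$ is automatic for anything in $\operatorname{im}\partial_p$---is handled correctly.

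As for comparison with the paper: the paper does not supply its own proof of this proposition but defers to \cite{HypergraphHomology}, Proposition~2.4. Your short-exact-sequence approach is a clean and standard way to establish such a result, and is likely close in spirit to what the cited reference does; in any case it stands on its own.
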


The above isomorphism is natural in the following sense:
Suppose we have two graded subgroups $\{ D_\ast \}$ and $\{ D'_\ast \}$ of $\{C_\ast, \partial_\ast \}$, and $D_p \subset D'_p$ for all $p$. Let $S_\ast, I_\ast$ (resp. $S'_\ast, I'_\ast$) be the supremum and infimum subcomplex of $D_\ast$ (resp. $D'_\ast$), then $S_\ast$, $I_\ast$ embeds in $S'_\ast$, $I'_\ast$ respectively, such that the following diagram

\begin{tikzcd}
 H^{\inf}_\ast (D_\ast; C_\ast) \arrow[r] \arrow[d] & H^{\sup}_\ast (D_\ast; C_\ast) \arrow[d]\\
 H^{\inf}_\ast (D'_\ast; C_\ast) \arrow[r]         & H^{\sup}_\ast (D'_\ast; C_\ast)
\end{tikzcd} \\
commutes, where all homomorphisms are induced by inclusions.

There are at least two settings where graded subgroups arise naturally. We present both of them in the following.

\subsubsection{Path Homology}
The first case concerns the notion of path homology of directed graphs (digraphs). Formally, a (finite) digraph is a pair $G=(X,E)$ where $X$ is a finite set and $E \subset X \times X$. We deal exclusively with digraphs without self-loops. 

Given a non-negative integer $p$, an \textit{elementary p-path} on the set $X$ is a sequence $x_0\cdots x_p$ of $p+1$ elements of $X$. Denote by $\Lambda_p = \Lambda_p(X)$ the $k$ vector space of all formal linear combinations of elementary $p$-paths on $X$. An elementary path $x_0\cdots x_p$ as an element of $\Lambda_p$ is denoted $e_{x_0\cdots x_p}$. Define the boundary operator $\overline{\partial} : \Lambda_p \to \Lambda_{p-1}$ by
\begin{equation*}
    \overline{\partial} (e_{x_0\cdots x_p}) = \sum_{i=0}^{p} (-1)^i e_{x_0\cdots\hat{x_i} \cdots x_p}
\end{equation*}
where $\hat{x_i}$ denotes omission as usual. It can be shown (\cite{grigor2020path} Lemma 2.1) that ${\overline{\partial}}^2 =0$, thus $\{\Lambda_\ast\}$ is a chain complex.

An elementary path $x_0\cdots x_p$ is called \textit{regular} if $x_{k-1} \neq x_k, 1 \leq k \leq p$. Define $\mathcal{R}_p = \mathcal{R}_p(X)$ (resp. $\mathcal{I}_p=\mathcal{I}_p(X)$) as the subspace of $\Lambda_p$ consisting of the formal linear combinations of regular (resp. irregular) elementary $p$-paths. It is not hard to check that $\mathcal{I}_\ast$ is a subcomplex of $\Lambda_\ast$, so the quotient chain complex $(\Lambda / \mathcal{I})_\ast$ is well-defined. Since $(\Lambda / \mathcal{I})_p$ and $\mathcal{R}_p$ are canonically isomorphic, we obtain a chain complex $\{\mathcal{R}_\ast, \partial_\ast\}$ via this identification.

The digraph structure enters the scene in the following way: an elementary regular path $x_0\cdots x_p$ is called \textit{allowed} if each $(x_{k-1},x_k)$ belongs to the edge set $E$. In other words, it is a head-to-tail concatenation of arrows. The set of formal linear combinations of allowed elementary $p$-paths of $G$ is denoted $\mathcal{A}_p = \mathcal{A}_p(G)$. Noting that $\mathcal{A}_\ast$ is a graded subgroup of $\{\mathcal{R}_\ast, \partial_\ast\}$, we define the $p$-dimensional path homology of $G$ as $H_p(G) := H_p^{\sup}(\mathcal{A}_\ast; \mathcal{R}_\ast) \cong H_\ast^{\inf}(\mathcal{A}_\ast; \mathcal{R}_\ast)$.

Given $G=(X,E)$ and $G^t=(X,E')$ with $E \subset E'$, we have an embedding $\mathcal{A}_p(G) \subset \mathcal{A}_p(G^t)$ for each $p$, and thus an induced homomorphism $H_\ast(G) \to H_\ast(G^t)$.

\subsubsection{Hypergraph Homology}
The second case concerns homology of hypergraphs. Let $V$ be a finite set. The power set $P(V)$ of $V$ is the collection of all non-empty subsets of $V$. A \textit{hypergraph} $\mathcal{H}$ on $V$ is just a subset of $P(V)$. Elements of $\mathcal{H}$ are called \textit{hyperedges}. If $\tau \subset \sigma \in \mathcal{H}$ implies $\tau \in \mathcal{H}$, then $\mathcal{H}$ would be an abstract simplicial complex, but we do not assume this in general.

Any hypergraph $\mathcal{H}$ can be embedded in a simplicial complex $K$. The most economical choice being $K_{\mathcal{H}} = \{\tau | \tau \subset \sigma \mbox{ for some } \sigma \in \mathcal{H}\}$. Actually, $K_{\mathcal{H}}$ is minimal among all such $K$. 

For a simplicial complex $L$, let $\Delta_\ast(L)$ denote the (oriented) simplicial chain complex of $L$. Given a hypergraph $\mathcal{H}$, define $\Delta_\ast(\mathcal{H})$ as the graded subgroup of $\Delta_\ast(K_{\mathcal{H}})$ spanned by the hyperedges of $\mathcal{H}$, and define the \textit{embedded homology} of $\mathcal{H}$ as $H_\ast(\mathcal{H})= H^{\sup}_\ast(\Delta_\ast(\mathcal{H}),\Delta_\ast(K_{\mathcal{H}})) \cong H^{\inf}_\ast(\Delta_\ast(\mathcal{H}),\Delta_\ast(K_{\mathcal{H}}))$. When $\mathcal{H}$ is embedded in a simplicial complex $K$, there are canonical embeddings of $\Delta_\ast(K_{\mathcal{H}})$ and $\Delta_\ast(\mathcal{H})$ into $\Delta_\ast(K)$. By Remark \ref{RemarkGradedSubgroups}, we could replace $K_{\mathcal{H}}$ by any such $K$ in the definition of $H_\ast(\mathcal{H})$. This enables us to 
formulate naturality for hypergraph homology. Given hypergraphs $\mathcal{H}' \subset \mathcal{H}$, choose a embedding of $\mathcal{H}$ in a simplicial complex $K$. There is an induced homomorphism
\begin{equation*}
    H_\ast(\mathcal{H}') = H_\ast(\Delta_\ast(\mathcal{H}'),\Delta_\ast(K)) \rightarrow H_\ast(\Delta_\ast(\mathcal{H}),\Delta_\ast(K)) = H_\ast(\mathcal{H})
\end{equation*}
Note that this homomorphism is independent of the choice on $K$.

\subsection{Persistent Homology}

Here is a brief review of the standard persistent homology from an algebraic perspective.

\subsubsection{Persistence Module and Interval Decomposition}
For our purpose, a \textit{persistence module} $V= (\{V^i\}, \{ \phi^i \})$ is a sequence of $k$-vector spaces and $k$-linear maps
\begin{equation*}
    \cdots \xrightarrow{\phi^{i-1}} V^i \xrightarrow{\phi^{i}} V^{i+1} \rightarrow \cdots
\end{equation*}
A \textit{morphism} of persistence module from $(\{V^i\}, \{ \phi^i \})$ to $(\{W^i\}, \{ \psi^i \})$ is a sequence of linear maps $\eta^i : V^i \rightarrow W^i$ such that $\psi^i \circ \eta^i = \eta^{i+1} \circ \phi^i$. If each $\eta^i$ is an isomorphism, we call it an \textit{isomorphism of persistence modules}.
Given a family $\{ V_\lambda \}$ of persistence modules, define their \textit{direct sum} $\oplus_\lambda V_\lambda$ by $(\oplus_\lambda V_\lambda^i, \oplus_\lambda \phi_\lambda^i )$. \\

We are mainly interested in decomposing a persistence module into simple building blocks up to isomorphism. The simplest non-trivial modules are the interval modules.

For an interval $I$, the \textit{interval module} $k_I$ is defined by
\begin{equation*}
    k_I^m=
    \begin{cases}
    k & m \in I \\
    0 & otherwise
    \end{cases}
\end{equation*}
with identity map joining nonzero vector spaces (the other maps are automatically zero).
Most persistence modules we encounter in practice are decomposable into interval modules:
\begin{prop}{(\cite{botnan2020decomposition} Theorem 1.2)}
If a persistence module $V$ is of pointwise finite-dimensional, i.e., $\dim V^i < \infty$ for all $i$, then $V$ is isomorphic to a direct sum of interval modules. Moreover, such decomposition is unique.
\end{prop}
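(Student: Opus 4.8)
The plan is to establish existence and uniqueness separately, uniqueness being the shorter half. A morphism $k_I \to k_I$ restricts to a scalar on each nonzero component, and commutativity with the identity structure maps forces all those scalars to coincide, so $\operatorname{End}(k_I) \cong k$; in particular this ring is local, and $k_I \not\cong k_J$ whenever $I \neq J$ (compare supports and ranks of structure maps). The Krull--Remak--Schmidt--Azumaya theorem then says that a module expressible as a direct sum of modules with local endomorphism rings has such an expression unique up to a bijection of index sets matching isomorphic summands; applied to $V$, this is exactly the asserted uniqueness, and it remains valid for infinite decompositions.

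For existence I would first dispose of the finitely generated case. A persistence module supported on a finite window $\{a,a+1,\dots,b\}$ is the same datum as a finitely generated $\mathbb{Z}$-graded module over the polynomial ring $k[t]$, with $t$ acting by the structure maps; since $k[t]$ is a principal ideal domain whose homogeneous ideals are $0$ and the $(t^m)$, the graded structure theorem gives $M \cong \bigoplus_i k[t](a_i) \oplus \bigoplus_j \bigl(k[t]/(t^{m_j})\bigr)(b_j)$. Translating back, the free summands are half-infinite interval modules and the torsion summands are finite interval modules, so every finite-window module --- and more generally every finitely generated graded $k[t]$-module, equivalently every representation of a linearly oriented $A_n$ quiver, which is Gabriel's theorem in this case --- is a direct sum of interval modules.

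For a general pointwise finite-dimensional $V$ I would restrict to the windows $W_n := V|_{[-n,n]}$, each of which decomposes into interval modules by the previous paragraph and uniquely by Krull--Schmidt. Intersecting the interval summands of $W_{n+1}$ with $[-n,n]$ recovers the decomposition of $W_n$, and from this bookkeeping the multiplicity of any fixed finite interval $k_{[a,b]}$ in $W_n$ is eventually independent of $n$, equal to some $m_{a,b}$; tracking the summands that touch the right endpoint, the left endpoint, or both endpoints of the window gives in the same way stable multiplicities $r_a$, $\ell_b$, $u$ for the half-infinite and bi-infinite intervals. Since for each index $c$ the windows containing $c$ in their interior each have exactly $\dim V^c$ summands through $c$, and the finite-interior count there is monotone in $n$ and bounded by $\dim V^c$ while the boundary counts are monotone the other way, passing to the limit gives $\dim V^c = \sum_{a\le c\le b} m_{a,b} + \sum_{\alpha\le c} r_\alpha + \sum_{\beta\ge c}\ell_\beta + u < \infty$. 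Hence the module $W$ consisting of $m_{a,b}$ copies of $k_{[a,b]}$, $r_a$ copies of $k_{[a,\infty)}$, $\ell_b$ copies of $k_{(-\infty,b]}$, and $u$ copies of $k_{(-\infty,\infty)}$ is pointwise finite-dimensional and satisfies $W|_{[-n,n]}\cong V|_{[-n,n]}$ for every $n$.

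The main obstacle is the final step: concluding $V \cong W$ from these windowwise isomorphisms, which is not automatic because they need not be mutually compatible. I would close the gap with a limiting argument --- $\operatorname{Hom}(V,W) = \varprojlim_n \operatorname{Hom}(V|_{[-n,n]}, W|_{[-n,n]})$ is an inverse limit of finite-dimensional vector spaces in which the isomorphisms form a nonempty Zariski-open subset, and since the Zariski closures of the images of the restriction maps form a descending, hence eventually constant, chain of closed subsets, a genericity argument produces a compatible system, i.e.\ an isomorphism $V \to W$. Alternatively, and this is the route of the cited reference, one avoids windows altogether and carries out Crawley--Boevey's functorial-filtration construction, which peels the interval summands directly out of $V$ using canonical subspaces of each $V^i$ built from images from the left and kernels to the right; making that construction precise is where the real work lies.
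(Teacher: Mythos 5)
The paper does not prove this proposition at all; it is stated with a citation to Botnan--Crawley-Boevey and used as a black box, so there is no in-paper proof to compare against. Evaluating your argument on its own terms: the uniqueness half (local endomorphism rings plus Krull--Remak--Schmidt--Azumaya) is correct and standard, and the existence argument for finite windows via the graded structure theorem over $k[t]$, equivalently Gabriel's theorem for linearly oriented $A_n$, is also fine. The bookkeeping that stabilizes the multiplicities $m_{a,b}$, $r_a$, $\ell_b$, $u$ using uniqueness of the window decompositions is a reasonable reduction and is essentially sound.

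The genuine gap is exactly where you flag it, but the proposed fix does not close it. Surjectivity of $\varprojlim_n \operatorname{Hom}(V|_{[-n,n]}, W|_{[-n,n]}) \to H_n^\infty$ onto the stable images is indeed automatic from Mittag--Leffler in a tower of finite-dimensional spaces, and each stable image does contain an isomorphism; but choosing a \emph{compatible} chain of isomorphisms is not a formal consequence. The set of isomorphisms is the complement of a countable family of determinantal hypersurfaces (one for each index $i$), and over a countable field --- in particular over $\mathbb{Z}/2$, the default coefficient field in this paper and in TDA generally --- a countable union of proper Zariski-closed subsets can cover the whole space, so ``generic'' need not mean ``exists.'' Even over an uncountable field you would need an additional argument (irreducibility of the relevant pro-variety, or a Baire-type statement) that you have not supplied. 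This is not a cosmetic issue: it is precisely why the literature does not prove the theorem by a naive limiting argument and instead uses either Crawley-Boevey's functorial filtrations, or a direct-limit / Auslander--Ringel--Tachikawa style argument showing $V$ is a summand of a direct sum of intervals and then invoking Azumaya. To make your route work you would at minimum need to replace the genericity step with an explicit construction of compatible matchings between the barcodes of successive windows (choosing each lift by hand rather than generically), and that is, as you say, where the real work lies --- so as written the existence half remains unproved.
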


The collection of intervals in the decomposition of $V$ is called the \textit{persistent barcode} of $V$.
\subsubsection{Persistent Homology}

In topological data analysis, persistence modules usually arise as homology groups of filtrations of subcomplexes. Given a chain complex $\{C_\ast, \partial_\ast \}$ and a filtration
\begin{equation}\label{eq:1}
    C^1_\ast \hookrightarrow \cdots \hookrightarrow C^i_\ast \hookrightarrow C^{i+1}_\ast \hookrightarrow \cdots \hookrightarrow C^{N}_\ast
\end{equation}
of subcomplexes. The inclusions induce a persistence module on homology
\begin{equation*}
    \cdots \rightarrow H_p(C^i_\ast) \rightarrow H_p(C_\ast^{i+1}) \rightarrow \cdots
\end{equation*}
for each dimension $p$. These are called the \textit{persistent homology} (PH) of (\ref{eq:1}).

In practice, the filtration (\ref{eq:1}) usually comes from a filtration
\begin{equation*}
    \cdots \hookrightarrow K^i \hookrightarrow K^{i+1} \hookrightarrow \cdots
\end{equation*}
of subcomplexes of an ambient simplicial complex $K$, as the (oriented) simplicial chain complexes. The most common scenario where this arises is where we are given a real valued function $f : K^{(0)} \rightarrow \mathbb{R}$ defined on the vertex set of a simplicial complex $K$. Let $ -\infty = a_0 < a_1 <a_2 < \cdots < a_n$ be the values of $f$. The sublevel subcomplexes $K_i = \{ \sigma \in K | f(v) \leq a_i \mbox{ for all } v \in \sigma \}$ constitutes a filtration. The persistent homology of their simplicial chain complexes is called the persistent homology of $K$ with respect to $f$.

\subsubsection{The Standard PH Algorithm}\label{StandardPHAlgorithm}
A crucial step of topological data analysis is obtaining the interval decomposition of the PH of (\ref{eq:1}). We shall give a sketch description of the standard algorithm for this task, for later we will detail a generalized version of it.

We need a few algebraic preparations. For each nonzero column $j$ of a matrix $A$ (with entries in $k$), define $low_j A$ as the order of the row with the lowest nonzero element of $\mbox{col}_j$. We say a matrix $A$ is \textit{reduced} if $low_i \neq low_j$ for $i \neq j, \mbox{col}_i \neq 0, \mbox{col}_j \neq 0$, and call the position of those $low_j$ the \textit{pivot} of $A$. Any matrix can be turned into a reduced one by left-to-right additions on the columns, and the location of pivots are independent of the particular choice of left-to-right additions (\cite{CT} p.154).

We may assume the filtration (\ref{eq:1}) is exhaustive, for otherwise we could replace $C_\ast$ by $C_\ast^N$. The input of the algorithm is a compatible basis $\{e_p^1,e_p^2,\cdots,e_p^{m_p}\}$ of the filtration
\begin{equation*}
    \cdots \hookrightarrow C^i_p \hookrightarrow C^{i+1}_p \hookrightarrow \cdots
\end{equation*}
for each $p$. In the simplicial setting, the basis elements corresponds to simplexes. We find the matrix of the boundary operator $\partial_{p+1}$ with respect to $\{e_p^1,e_p^2,\cdots,e_p^{m_p}\}$ and $\{e_{p+1}^1,e_{p+1}^2,\cdots,e_{p+1}^{m_{p+1}}\}$. Perform a reduction of $\partial_{p+1}$ by left-to-right additions and collect the pivot positions from the reduced matrix. Each pivot position $low_j=i$ produce a \textit{pairing} between $e_p^i$ and $e^j_{p+1}$. These pairings are all one needs to write down the interval decomposition of the persistent homology. To be precise, each unpaired $e_p^i$ contribute an interval $[ht(e_p^i), +\infty)$ to $H_p$, while each pair $(e_p^i, e^j_{p+1})$ contributes an interval $[ht(e_p^i), ht(e^j_{p+1}))$ to $H_p$.

\section{PH for Filtration of Graded Subgroups}
\subsection{Background}
Suppose now we have a filtration
\begin{equation*}
    \cdots \hookrightarrow D^i_\ast \hookrightarrow D^{i+1}_\ast \hookrightarrow \cdots
\end{equation*}
of graded subgroups of a chain complex $\{C_\ast, \partial_\ast \}$. Define the persistent ($p$-dimensional) homology of this filtration to be the persistent module:
\begin{equation*}
    \cdots \rightarrow H^{sup}_p(D^i_\ast) \rightarrow H^{sup}_p(D_\ast^{i+1}) \rightarrow \cdots
\end{equation*}
By the discussion in Section \ref{Graded Subgroups of a Chain Complex}, this is equivalent (up to isomorphism) to
\begin{equation*}
    \cdots \rightarrow H^{inf}_p(D^i_\ast) \rightarrow H^{inf}_p(D_\ast^{i+1}) \rightarrow \cdots
\end{equation*}

Our goal is to give an algorithm for computing the interval decomposition of this persistence module. The input should be a compatible basis $\{e_p^1,e_p^2,\cdots,e_p^{m_p}\} $ of the filtration
\begin{equation*}
    \cdots \hookrightarrow D^i_p \hookrightarrow D^{i+1}_p \hookrightarrow \cdots \hookrightarrow D^N_p 
\end{equation*}
for each $p$, a basis of $C_p$ extending $\{e_p^1,e_p^2,\cdots,e_p^{m_p}\} $, together with data recording the the behavior of $\partial$ with respect to these basis. Note that the extension is necessary, for $\partial D^N_{p+1}$ may not be contained in $D^N_p$. As is expected, not all of the extended basis are relevant: we merely need those that appear in the boundary of $e_p^i$'s, which in practice might be a tiny fraction.

It might appears economical to deal with infimum chain complexes, since they are smaller. An algorithm following this line of thought is detailed in \cite{Chowdhury18}. This approach requires as a initial step the computation of a compatible basis for the filtration
\begin{equation*}
    \cdots \hookrightarrow I^i_\ast \hookrightarrow I^{i+1}_\ast \hookrightarrow \cdots
\end{equation*}
Once such compatible basis is obtained, one could apply the standard PH algorithm. The initial step requires not only column operations, but also row operations to keep track of the updated basis. Another drawback is the assumption that the filtration $\{ D^i_\ast \}$ be exhaustive, i.e., $\cup_i D^i_p=C_p$ for all $p$. If this is not satisfied then one has to add $C_p$ as $D^{N+1}_p$. Thus in effect one has to deal with the entirety of $C_p$. 

The method we are about to present uses supremum complexes instead. We use column operations only, and these operations need not be recorded.

\begin{remark}
 The method of \cite{Chowdhury18} can be improved in the following way: one could add $S^N_\ast$ as $D^{N+1}_\ast$, and replace $C_\ast$ by $S^N_\ast$. This would significantly reduce the size of basis to be processed. However, it requires another step of computing $S^N_\ast$, and is still far more complicated than the method using supremum complexes.
\end{remark}
 
\subsection{Computing by supremum complex}\label{ComputePHbySup}
We now present a method based on supremum complexes. Instead of computing a compatible basis of 
\begin{equation*}
    \cdots \hookrightarrow S^i_\ast \hookrightarrow S^{i+1}_\ast \hookrightarrow \cdots
\end{equation*}
and invoking the standard PH algorithm, we will mimic the construction in said algorithm.

For each $p$, let $\{e_p^1,e_p^2,\cdots,e_p^{m_p}\}$ be a compatible basis of the filtration $\{D_p^i\}_{1\leq i\leq N}$. In the rest of this section, all tail positions are with respect to $\{e_p^1,e_p^2,\cdots,e_p^{m_p}\}$ and all heights are with respect to $\{D_p^i\}_{1\leq i\leq N}$. 

Let $\{e_p^1,e_p^2,\cdots,e_p^{m_p}\} \cup B_p$ be a (extended) basis of $C_p$, and let
$\{ \epsilon_p^1, \epsilon_p^2, \cdots , \varepsilon^{n_p}_p \}$ be the members of $B_p$ that appear in the expression of $\partial e^j_{p+1} $'s with respect to $\{e_p^1,e_p^2,\cdots,e_p^{m_p}\} \cup B_p$. Denote by $A_{p+1}$ the matrix of $\partial_{p+1|D_{p+1}^N} $ with respect to $\{ e^1_{p+1}, \cdots e^{m_{p+1}}_{p+1} \}$ and $\{e_p^1,e_p^2,\cdots,e_p^{m_p}, \epsilon_p^1, \epsilon_p^2, \cdots , \epsilon^{n_p}_p\}$, ordered as such. Since $S_{p+1}^N = D_{p+1}^N + \partial D_{p+2}^N$ and $\partial \circ \partial = 0$, the boundary map $S_{p+1}^N \rightarrow S_p^N$ of the supremum complex is encoded in $A_{p+1}$. In practice (e.g. hypergraphs and path complexes), both $\{ \epsilon_p^1, \epsilon_p^2, \cdots , \epsilon^{n_p}_p \}$ and $A_{p+1}$ should be readily read off any appropriate representation of the data.

Next we reduce $A_{p+1}$ by performing left-to-right additions on its columns. Denote the reduced matrix as $\tilde{A}_{p+1}$, and the resulted new basis of $D^N_{p+1}$ as $\{ \tilde{e}^1_{p+1}, \cdots \tilde{e}^{m_{p+1}}_{p+1} \}$, so $\mbox{col}_j\tilde{A}_{p+1}$ encodes $\partial \tilde{e}^j_{p+1}$. If we merely need the persistent barcodes, then the expression of $\{ \tilde{e}^1_{p+1}, \cdots \tilde{e}^{m_{p+1}}_{p+1} \}$ with respect to $\{ {e}^1_{p+1}, \cdots {e}^{m_{p+1}}_{p+1} \}$ need not be stored. In other word, we do not have to record the column additions performed.

The pivot positions in $\tilde{A}_{p+1}$ defines a pairing: we say $(e^i_p,e^j_{p+1})$ is a \textit{pair} if the pivot of $\mbox{col}_j\tilde{A}_{p+1}$ is at the $i$-th row. Equivalently, this is saying that $tp(\partial \tilde{e}^j_{p+1}) = i$. Since pivot positions are independent of the particular reduction strategy, so are the pairings.

Similar to pairing in standard PH, an element of $\cup_p \{e^1_p,\cdots,e^{m_p}_p \}$ appears in at most one pair, this will follow from the following proposition:
\begin{prop} \label{Clearing}
If $(e^i_p,e^j_{p+1})$ is a pair, then $\mbox{col}_i\tilde{A}_{p}=0$.
\end{prop}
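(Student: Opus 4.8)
The plan is to mimic the classical "clearing" argument from the standard persistence algorithm, adapted to the supremum-complex setting. Recall the setup: after reducing $A_{p+1}$ we obtain a new compatible basis $\{\tilde e^j_{p+1}\}$ of $D^N_{p+1}$ with $\mathrm{col}_j\tilde A_{p+1}$ encoding $\partial\tilde e^j_{p+1}$, and similarly $A_{p+2}$ reduces to $\tilde A_{p+2}$ with $\mathrm{col}_k\tilde A_{p+2}$ encoding $\partial\tilde e^k_{p+2}$ in terms of $\{\tilde e^j_{p+1}\}\cup\{\text{extended basis of }C_{p+1}\}$. Suppose $(e^i_p,e^j_{p+1})$ is a pair, so $tp(\partial\tilde e^j_{p+1})=i$ and in particular $\partial\tilde e^j_{p+1}\neq 0$. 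I want to show $\mathrm{col}_i\tilde A_p=0$, i.e.\ $\partial\tilde e^i_p=0$.

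First I would invoke $\partial\circ\partial=0$: since $\partial\tilde e^j_{p+1}$ is a linear combination of the $e^\ell_p$'s (and extended basis elements) whose top term, in the ordering on $\{e^1_p,\dots,e^{m_p}_p,\epsilon^1_p,\dots\}$, is $e^i_p$ with nonzero coefficient, applying $\partial_p$ annihilates it. The key point — and this is where the argument genuinely differs from the simplicial case — is that $\partial_p$ here must be read on the supremum complex $S^N_p$, since $\partial\tilde e^j_{p+1}$ need not lie in $D^N_p$; that is exactly why the extended basis $\{\epsilon^k_p\}$ was introduced and why $\partial_{p|S^N_p}$ is encoded by the reduced $\tilde A_p$ together with the corresponding boundary data on the $\epsilon^k_p$'s. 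So $0=\partial_p(\partial\tilde e^j_{p+1})$ expresses a linear dependence among $\partial\tilde e^i_p$ and the $\partial$ of lower-order basis elements of $D^N_p$ and of the $\epsilon^k_p$'s.

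The main obstacle is converting this single linear dependence into the conclusion $\partial\tilde e^i_p=0$, and for that I would use reducedness of $\tilde A_p$ exactly as in the classical pivot-uniqueness argument. Because $\tilde A_p$ is reduced, the nonzero columns $\mathrm{col}_\ell\tilde A_p$ have pairwise distinct lowest-nonzero-row indices $low_\ell$. If $\partial\tilde e^i_p\neq 0$ with $tp(\partial\tilde e^i_p)=q$, then in the dependence $\partial_p(\partial\tilde e^j_{p+1})=0$ coming from $\partial\partial=0$, the coefficient of $e^i_p$ in $\partial\tilde e^j_{p+1}$ is nonzero and $i$ is strictly the largest index appearing, so $e^q_{p-1}$ can only be cancelled by a $\partial\tilde e^{i'}_p$ with $i'<i$ having $low_{i'}=q$ — contradicting that $low_i=q$ and pivots are distinct (here I also need that the leading term $e^i_p$ of $\partial\tilde e^j_{p+1}$ is itself a member of $D^N_p$, not an $\epsilon^k_p$, which holds because $tp(\partial\tilde e^j_{p+1})=i\le m_p$ by hypothesis, and that the $\partial\epsilon^k_p$ contributions, being of order $<i$ in the combined basis, cannot interfere with the leading cancellation). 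This is the delicate bookkeeping step: one must track the ordering $e^1_p<\dots<e^{m_p}_p<\epsilon^1_p<\dots$ carefully and confirm that the "lowest pivot wins" reasoning survives the presence of the extra $\epsilon$-block. Finally, uniqueness of pivot positions under left-to-right reduction (cited from \cite{CT}) guarantees the statement is independent of the chosen reduction, completing the argument.
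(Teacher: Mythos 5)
Your high-level plan — use $\partial\circ\partial=0$ to obtain a linear dependence and then invoke reducedness/pivot-invariance — is the same as the paper's, but the execution contains a genuine gap and a couple of misreadings of the setup.

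First, the framing around the $\epsilon^k_p$'s is a red herring for this particular statement. Since $(e^i_p,e^j_{p+1})$ is a pair, the pivot of $\mathrm{col}_j\tilde A_{p+1}$ sits at row $i$ with $i\le m_p$, and the $\epsilon$-rows come \emph{after} the $e_p$-rows in the chosen ordering. Hence every nonzero entry of $\mathrm{col}_j\tilde A_{p+1}$ lies in rows $1,\dots,i$, so $\partial\tilde e^j_{p+1}=a_1 e_p^1+\cdots+a_i e_p^i\in D^N_p$ with no $\epsilon$-terms whatsoever; there is nothing to "read on the supremum complex," and your parenthetical that the $\partial\epsilon^k_p$ contributions are "of order $<i$" has the ordering backwards (they are of order $>m_p\ge i$, and in any case absent). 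This confusion does not break the argument only because the terms you are worrying about vanish.

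Second, and more seriously, the cancellation step is wrong as stated. From $\partial\tilde e^j_{p+1}=\sum_{\ell\le i}b_\ell\tilde e^\ell_p$ with $b_i\ne0$ (after passing to the reduced basis via the unitriangular change) you get $\sum_{\ell\le i}b_\ell\,\mathrm{col}_\ell\tilde A_p=0$. You then claim the row-$q$ entry, $q=low_i$, "can only be cancelled by a $\partial\tilde e^{i'}_p$ with $low_{i'}=q$." That is false: a column $\ell$ can have a nonzero entry at row $q$ with $low_\ell>q$, so reducedness ($low$'s distinct) does not immediately forbid cancellation at row $q$. The correct version of your argument takes $\ell_0$ with $low_{\ell_0}$ \emph{maximal} among the columns contributing with nonzero coefficient; reducedness makes that maximizer unique, so the row-$low_{\ell_0}$ entry of the sum is nonzero, a contradiction. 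The paper sidesteps this bookkeeping entirely: the identity $\sum_{\ell\le i}a_\ell\,\partial e^\ell_p=0$ with $a_i\ne0$ says $\mathrm{col}_iA_p$ is a linear combination of earlier columns, so \emph{some} left-to-right reduction zeroes out column $i$, and pivot-position invariance then forces $\mathrm{col}_i\tilde A_p=0$ for every reduction. You should either repair your cancellation step by passing to the maximal pivot, or adopt the paper's shorter "exists one reduction, hence all" argument.
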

 \begin{proof}
 The pairing condition means that $\partial \tilde{e}^j_{p+1} = a_1 e_p^1 + a_2 e_p^2 + \cdots +a_i e_p^i$ where $a_i \neq 0$. Since $\partial (\partial \tilde{e}^j_{p+1}) = a_1 \partial e_p^1 + a_2 \partial e_p^2 + \cdots +a_i \partial e_p^i = 0$, there is a reduction $\tilde{A}_{p}$ of $A_p$ such that $\mbox{col}_i\tilde{A}_{p}=0$. The invariance of pivot positions means this is true for all possible reductions.
 \end{proof}
 
 \begin{remark}
  The pairing defined above obviously generalizes pairing in standard PH. A few notable distinctions are:
  
 (1) In standard PH, when $e^i_p,e^j_{p+1}$ are paired, we always have $ht(e^i_p) \leq ht(e^j_{p+1})$. This is no longer true for generalized pairing, for $D^i_\ast$'s are not closed under the boundary map.
 
 (2) In standard PH, if $\mbox{col}_j\tilde{A}_{p+1} \neq 0$, then $e^j_{p+1}$ is always paired (with some $e^i_p$). This is not true for the generalized case, since the pivot of $\mbox{col}_j\tilde{A}_{p+1}$ may fall into the rows corresponding to the $\epsilon_p$'s.
 \end{remark}
 
 \begin{remark}
  Proposition \ref{Clearing} implies that the clearing method introduced in \cite{chen2011persistent} is applicable in our setting. That is, we may reduce the matrices $A_p$ in decreasing order of dimension (assuming the chain complex is finite dimensional), and omit the reduction of $col_i A_p$ if $e_p^i$ is already paired.  
 \end{remark}
 \
 The desired PH can be read from the pairing and height of basis elements. To be precise, we have:
 \begin{theorem}
 The pairing determines the interval decomposition of the persistence module
 \begin{equation}\label{eq:2}
    \cdots \rightarrow H^{sup}_p(D^i_\ast) \rightarrow H^{sup}_p(D_\ast^{i+1}) \rightarrow \cdots
\end{equation}
in the following way:

(i) each unpaired $e_p^i$ such that $\mbox{col}_i\tilde{A}_{p} = 0$ contributes an interval $[ht(e_p^i), + \infty)$.

(ii) each pair $(e^i_p,e^j_{p+1})$ such that $ht(e^i_p) < ht(e^j_{p+1})$ contributes an interval $[ht(e_p^i), ht(e_{p+1}^j))$ (see Figure).
 \end{theorem}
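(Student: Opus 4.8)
The plan is to deduce the theorem from the correctness of the standard PH algorithm of Section~\ref{StandardPHAlgorithm}, applied to the filtration of supremum subcomplexes $S^1_\ast\subseteq\cdots\subseteq S^N_\ast$ (taken inside the chain complex $S^N_\ast$), where $S^i_p=D^i_p+\partial_{p+1}D^i_{p+1}$. Indeed $H^{\sup}_p(D^i_\ast)=H_p(S^i_\ast)$ by definition, and by the naturality discussed in Section~\ref{Graded Subgroups of a Chain Complex} the structure maps of \eqref{eq:2} are precisely those induced by the inclusions $S^i_\ast\hookrightarrow S^{i+1}_\ast$, so \eqref{eq:2} is the ordinary persistent homology of this filtration of subcomplexes. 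Throughout I would work with the reduced bases $\{\tilde e^\bullet_p\}$: they are again compatible and have the same heights as $\{e^\bullet_p\}$ (left-to-right additions preserve both, as established in the preliminaries), and since a left-to-right change of basis is upper unitriangular it preserves the tail position of \emph{every} vector of $D^N_p$, not just of basis vectors; hence it preserves all pivot positions, and the pairing is unaffected (the same remark lets me regard the target basis of $\tilde A_{p+1}$ as $\{\tilde e^\bullet_p\}\cup\{\epsilon^\bullet_p\}$).

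The core step is to manufacture, from the reduced matrices, a compatible basis of the filtration $\{S^i_p\}_{1\le i\le N}$. Let $L_p$ be the set of indices occurring as the $p$-dimensional component of a pair, and set
\[ \mathcal B_p \;:=\; \{\,\tilde e^l_p : l\notin L_p\,\}\ \cup\ \{\,\partial\tilde e^j_{p+1} : \mathrm{col}_j\tilde A_{p+1}\neq 0\,\}. \]
First I would check that $\mathcal B_p$ is a basis of $S^N_p=D^N_p+\partial D^N_{p+1}$. Spanning is an induction on $l$: if $l\in L_p$ is paired with $\tilde e^j_{p+1}$ then $\partial\tilde e^j_{p+1}$ has tail position $l$, so $\tilde e^l_p$ is a scalar multiple of $\partial\tilde e^j_{p+1}$ minus a combination of the $\tilde e^{l'}_p$ with $l'<l$, and those lie in $\mathrm{span}\,\mathcal B_p$ by induction. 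For independence I would first take $\epsilon_p$-coordinates: the columns of $\tilde A_{p+1}$ whose pivot falls among the $\epsilon_p$-rows have linearly independent $\epsilon_p$-parts, so any linear relation among the members of $\mathcal B_p$ must already kill those columns, and what is left is a family of vectors of $D^N_p$ with pairwise distinct lowest nonzero $\tilde e_p$-coordinates, hence independent. Proposition~\ref{Clearing} (which forces $\mathrm{col}_l\tilde A_{p+1}\neq 0\Rightarrow l\notin L_{p+1}$) makes this description consistent from one dimension to the next.

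Next I would pin down the heights of the elements of $\mathcal B_p$ in the filtration $\{S^i_p\}$; write $ht_S$ for these. Using that the nonzero columns of a reduced matrix are linearly independent and that $\partial S^i_{p+1}\subseteq S^i_p$, one obtains: (a) $ht_S(\tilde e^l_p)=ht(e^l_p)$ for $l\notin L_p$; (b) $ht_S(\tilde e^j_{p+1})=ht(e^j_{p+1})$ whenever $\mathrm{col}_j\tilde A_{p+1}\neq 0$; (c) $ht_S(\partial\tilde e^j_{p+1})=ht(e^j_{p+1})$ when the pivot of $\mathrm{col}_j\tilde A_{p+1}$ lies among the $\epsilon_p$-rows; and (d) $ht_S(\partial\tilde e^j_{p+1})=\min\{ht(e^i_p),ht(e^j_{p+1})\}$ when $(e^i_p,e^j_{p+1})$ is a pair. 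Item (d) is the subtle one: $\partial\tilde e^j_{p+1}$ lies in $D^N_p$, so its $ht_S$ is at most its $D$-height $ht(e^i_p)$, but it is also the boundary of $\tilde e^j_{p+1}\in D^{ht(e^j_{p+1})}_{p+1}$, so it is present already at level $ht(e^j_{p+1})$; the matching lower bound---that it is born no sooner than the minimum of the two heights---requires a tail-position computation in the spirit of the proof of Proposition~\ref{Clearing}. Granting (a)--(d), I would then verify that ordering $\mathcal B_p$ by $ht_S$ makes it compatible, i.e. $S^i_p=\mathrm{span}\{b\in\mathcal B_p:ht_S(b)\le i\}$; ``$\supseteq$'' is immediate, and ``$\subseteq$'' comes from expanding an element of $D^i_p+\partial D^i_{p+1}$ and applying the pivot relations, the key point being that a $\tilde e^l_p$ with $l\in L_p$ and $ht(e^l_p)\le i$ is recovered from $\partial\tilde e^j_{p+1}$, whose height by (d) is $\le ht(e^l_p)\le i$.

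Finally I would run the algorithm of Section~\ref{StandardPHAlgorithm} on $\{S^i_\ast\}$ with the bases $\{\mathcal B_p\}$. With respect to $\mathcal B$ the matrix of the supremum-complex boundary is \emph{already} reduced: a generator $\tilde e^l_{p+1}\in\mathcal B_{p+1}$ maps to $\partial\tilde e^l_{p+1}$, which is $0$ if $\mathrm{col}_l\tilde A_{p+1}=0$ and otherwise equals the generator $\partial\tilde e^l_{p+1}\in\mathcal B_p$, while a generator $\partial\tilde e^{j}_{p+2}\in\mathcal B_{p+1}$ maps to $0$; hence the nonzero columns are distinct coordinate vectors. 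So the algorithm pairs $\tilde e^l_{p+1}$ with $\partial\tilde e^l_{p+1}$ for each $l$ with $\mathrm{col}_l\tilde A_{p+1}\neq 0$, and the unpaired generators in degree $p$ are exactly the $\tilde e^l_p$ with $\mathrm{col}_l\tilde A_p=0$ (equivalently, the unpaired $e^l_p$ with $\mathrm{col}_l\tilde A_p=0$; an unpaired $e^l_p$ with $\mathrm{col}_l\tilde A_p\neq0$ is paired in the supremum-complex algorithm, contributing an empty interval by (c) in degree $p-1$). Substituting the heights (a)--(d): each such unpaired $e^i_p$ yields $[ht(e^i_p),+\infty)$; a pair whose column has its pivot among the $\epsilon_p$-rows yields the empty interval $[ht(e^j_{p+1}),ht(e^j_{p+1}))$; and a genuine pair $(e^i_p,e^j_{p+1})$ yields $[\min\{ht(e^i_p),ht(e^j_{p+1})\},ht(e^j_{p+1}))$, which is $[ht(e^i_p),ht(e^j_{p+1}))$ when $ht(e^i_p)<ht(e^j_{p+1})$ and empty otherwise. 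This is exactly (i)--(ii). I expect item (d)---the exact height of the boundary generators, with its minimum---together with the compatibility check for $\mathcal B_p$, to be the real work; the rest is bookkeeping with pivots.
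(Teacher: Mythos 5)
Your proof is correct, but it follows a genuinely different strategy from the paper's. The paper proceeds directly: it defines a morphism $\Phi$ from the direct sum of the interval modules in (i)--(ii) into the persistence module \eqref{eq:2} by sending initial $1$'s to $[\tilde e^i_p]$ (unpaired case) and $[\partial\tilde e^j_{p+1}]$ (paired case), and then proves the ``Claim'' that at each filtration level these classes form a basis of $H^{\sup}_p(D^i_\ast)$, via a linear-independence argument and a spanning argument both driven by tail-position bookkeeping with respect to the \emph{original} compatible basis of $\{D^i_p\}$. Your route instead transfers the whole problem to the filtration $\{S^i_\ast\}$ of supremum subcomplexes: you assemble an explicit compatible basis $\mathcal B_p$ of that filtration out of the reduced columns, compute each element's $S$-height (your (a)--(d), with the genuine content concentrated in the $\min$ formula (d), proved by the same pivot-invariance argument as Proposition~\ref{Clearing}), observe that in this basis the boundary matrix of $S^N_\ast$ is already reduced, and then invoke the correctness of the standard PH algorithm of Section~\ref{StandardPHAlgorithm}.

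The two proofs share the same tail-position engine --- your case (d) is essentially the paper's ``Case (3)'' in the spanning step, seen from the other side --- but they package it differently. The paper's version is self-contained (it effectively re-proves the relevant fragment of the standard algorithm in this setting), at the cost of a somewhat ad hoc basis-of-homology verification. Yours is modular and clarifies \emph{why} the supremum filtration is computationally benign: you exhibit a compatible basis in which the boundary is diagonal-like, so the classical theorem applies verbatim. The extra overhead in your route is the height computation and the compatibility check for $\mathcal B_p$, which you correctly flag as the real work; once those are in place, the conclusion (including the discarding of empty intervals when $ht(e^i_p)\ge ht(e^j_{p+1})$ or the pivot lands in the $\epsilon$-rows) falls out mechanically. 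Both are valid; your argument is closer in spirit to how one would justify the algorithm to a reader already comfortable with the standard PH machinery, while the paper's is closer to a first-principles verification.
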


 \begin{proof}
 We will construct a morphism from the direct sum of all the interval modules mentioned in (i) and (ii) to (\ref{eq:2}). By the universal property of direct sums, it suffice to define it on each summand. A morphism from an interval module the form $k_{[a,b)}$ or $k_{[a, + \infty)}$ is determined by the image of identity element of the vector space $k$ at $a$. We call this element the \textit{initial 1} of that interval module. 
 
 For each $e_p^i$ in case (i), map the initial $1$ of the corresponding interval module to the homology class of $H^{sup}_p(D^{ht(e_p^i)}_\ast)$ represented by the cycle $\tilde{e}_p^i$. For each pair $(e^i_p,e^j_{p+1})$ in case (ii), map the initial $1$ to the homology class of $H^{sup}_p(D^{ht(e_p^i)}_\ast)$ represented by $\partial \tilde{e}^j_{p+1}$. In case (ii), the homology class represented by $\partial \tilde{e}^j_{p+1}$ dies before entering the $ht(e_{p+1}^j)$-th stage. Thus we have a well-defined morphism $\Phi$ from the direct sum of the interval modules to (\ref{eq:2}).
 
 It remains to verify that $\Phi$ is an isomorphism at each level $i, 1 \leq i \leq N$. Let $\tilde{e}^{j_1}_p, \tilde{e}^{j_2}_p, \cdots , \tilde{e}^{j_r}_p, \partial \tilde{e}^{k_1}_{p+1}, \partial \tilde{e}^{k_2}_{p+1}, \cdots , \partial \tilde{e}^{k_s}_{p+1}$ be the images of initial $1$'s of intervals spanning $i$. It suffices to show that:\\
 
 \begin{flushleft}
 \textbf{Claim}: The elements $ [\tilde{e}^{j_1}_p], [\tilde{e}^{j_2}_p], \cdots , [\tilde{e}^{j_r}_p], [\partial \tilde{e}^{k_1}_{p+1}], [\partial \tilde{e}^{k_2}_{p+1}], \cdots , [\partial \tilde{e}^{k_s}_{p+1}]$ form a basis of $H^{sup}_p(D^i_\ast)$, where [$\alpha$] denotes the homology class represented by $\alpha$.\\\
 
 \textbf{Proof of the Claim}:
 \end{flushleft}
 We will assume working with $\mathbb{Z}_2$ coefficients for simplicity, the necessary modifications for general field coefficients should be obvious.
 
 Let $S^i_\ast$ be the supremum complex of $D^i_\ast$, so that $H^{sup}_p(D^i_\ast) = H_p(S^i_\ast)$. Since $S_p^i=D_p^i + \partial D_{p+1}^i$ and $\partial^2 =0$, we have $H_p(S^i_\ast) = (\mbox{Ker}(D_p^i \xrightarrow{\partial} C_{p-1}) + \partial D_{p+1}^i)/ \partial D_{p+1}^i$.
 Denote the basis elements paired with ${e}^{k_1}_{p+1}, \cdots , {e}^{k_s}_{p+1}$ by ${e}^{l_1}_{p}, \cdots , {e}^{l_s}_{p}$. By assumption, the height of ${e}^{j_1}_p, \cdots , {e}^{j_r}_p, {e}^{l_1}_{p}, \cdots , {e}^{l_s}_{p}$ are at most $i$, while the height of ${e}^{k_1}_{p+1}, \cdots , {e}^{k_s}_{p+1}$ are strictly greater than $i$. \\
 
 Start with linear independence. Suppose there exist a linear combination
 \begin{equation*}
     x_1[\tilde{e}^{j_1}_p] + \cdots + x_r[\tilde{e}^{j_r}_p] + y_1[\partial \tilde{e}^{k_1}_{p+1}] + \cdots + y_s[\partial \tilde{e}^{k_s}_{p+1}] = 0
 \end{equation*}
 such that the coefficients are not all zero. Then the chain
 \begin{equation*}
     \alpha = x_1\tilde{e}^{j_1}_p + \cdots + x_r\tilde{e}^{j_r}_p + y_1\partial \tilde{e}^{k_1}_{p+1} + \cdots + y_s \partial \tilde{e}^{k_s}_{p+1}
 \end{equation*}
 belongs to $\partial D_{p+1}^i$. Since $tp(\tilde{e}^{j_q}_p)=j_q, 1 \leq q \leq r$, $tp( \partial \tilde{e}^{k_t}_{p+1})=l_t, 1 \leq t \leq s$, and $\{ j_q | 1 \leq q \leq r\} \cap \{ l_t | 1 \leq t \leq s\} = \varnothing$, we see that $tp(\alpha))$ is well defined and has to be among $j_1, \cdots ,j_r, l_1, \cdots, l_s$. On the other hand, since $\alpha \in \partial D_{p+1}^i$ and $D_{p+1}^i = \mbox{span} \{{e}_{p+1}^j | ht(e_{p+1}^j) \leq i \} = \mbox{span} \{\tilde{e}_{p+1}^j | ht(e_{p+1}^j) \leq i \}$, we have $\alpha \in \mbox{span} \{ \partial \tilde{e}_{p+1}^j | ht(e_{p+1}^j) \leq i \}$. In particular, $e_p^{tp(\alpha)}$ is paired with some $e_{p+1}^j$ with $ht(e_{p+1}^j) \leq i$. But ${e}^{j_1}_p, \cdots , {e}^{j_r}_p$ are not paired at all, while ${e}^{l_1}_p, \cdots , {e}^{l_s}_p$ are paired with ${e}^{k_1}_{p+1}, \cdots , {e}^{k_s}_{p+1}$ respectively, leading to a contradiction. This shows linear independence of $ [\tilde{e}^{j_1}_p], \cdots , [\tilde{e}^{j_r}_p], [\partial \tilde{e}^{k_1}_{p+1}], \cdots , [\partial \tilde{e}^{k_s}_{p+1}]$.
 
 It remains to show that $ [\tilde{e}^{j_1}_p], \cdots , [\tilde{e}^{j_r}_p], [\partial \tilde{e}^{k_1}_{p+1}], \cdots , [\partial \tilde{e}^{k_s}_{p+1}]$ span $H_p(S^i_\ast)$. Set $K=\mbox{Ker}(D_p^i \xrightarrow{\partial} C_{p-1})$ and $E=\{ \tilde{e}^{j_1}_p, \cdots , \tilde{e}^{j_r}_p, \partial \tilde{e}^{k_1}_{p+1}, \cdots , \partial \tilde{e}^{k_s}_{p+1} \}$. Note that $E \subseteq K$. It suffice to prove that any $x \in K$ can be linearly represented by elements of $E$ and elements of $\partial D_{p+1}^i$. Suppose we can find for each $x \neq 0 \in K$ a decomposition
 \begin{equation}\label{eq:3}
     x=x' +e +d
 \end{equation}
 where $x' \in K, tp(x') < tp (x), e \in \mbox{span}E$ and $d \in \partial D_{p+1}^i$, then an inductive argument will do. Since $x \in K$, $\mbox{col}_{tp(x)} {A}_p $ can be linearly represented by the columns to its left, and thus $\mbox{col}_{tp(x)} \tilde{A}_p = 0$ by the invariance of pivot positions. Also, $ht(\tilde{e}_p^{tp(x)}) = ht({e}_p^{tp(x)}) = ht(x) \leq i$. There are 3 cases:\\
 
 \textbf{Case (1)}: The element $e_p^{tp(x)}$ is not paired. Then $ht({e}_p^{tp(x)}) \leq i$ implies that $e_p^{tp(x)}$ is among $ {e}^{j_1}_p, \cdots , {e}^{j_r}_p$. A decomposition (\ref{eq:3}) is obtained by setting $e = \tilde{e}_p^{tp(x)}$, $x'=x-e$ and $d = 0$.
 
 \textbf{Case (2)}: The element $e_p^{tp(x)}$ is paired with $e_{p+1}^k$ and $ht(e_{p+1}^k) > i$. In this case, $ht({e}_p^{tp(x)}) \leq i$ and $ht(e_{p+1}^k) > i$ implies that $e_{p+1}^k$ is among ${e}^{k_1}_{p+1}, \cdots , {e}^{k_s}_{p+1}$. Thus $e = \partial \tilde{e}_{p+1}^{k}$, $x'=x-e$ and $d = 0$ is the desired decomposition.
 
 \textbf{Case (3)}: The element $e_p^{tp(x)}$ is paired with $e_{p+1}^k$ and $ht(e_{p+1}^k) \leq i$. Let $d= \partial \tilde{e}_{p+1}^k \in \partial D_{p+1}^i$, $e=0$ and $x'=x-d$. Now
 \begin{equation*}
 \begin{split}
     ht(d) & = ht (e_p^{tp(d)}) \\
      & = ht (e_p^{tp(x)}), \: \mbox{for}  \: tp(d) = tp(\partial \tilde{e}_{p+1}^k)= tp(x) 
 \end{split}
 \end{equation*}
Hence $ht(d) \leq i$ and $d \in D_p^i$. This means $x' \in D_p^i$. We also have $\partial x' = \partial x - \partial d = 0-0 =0$, whence $x' \in K$. Finally, $tp(d) = tp(x)$ implies $tp(x') < tp(x)$. This finishes the proof of the Claim.
 \end{proof}

\section{Extended Persistent Homology}

\subsection{Standard Extended PH}
An extended version of persistent homology for simplicial complexes is proposed in \cite{CohenSteiner2009ExtendingPH}. As we will see later, it extracts strictly more information than the unextended version without a drastic increase of computational complexity.

The input is usually a real valued function $f : K^{(0)} \rightarrow \mathbb{R}$. Let $ -\infty = a_0 < a_1 <a_2 < \cdots < a_n$ be the values of $f$. The (unextended) persistent homology concerns the persistence modules (one for each $p$)
\begin{equation*}
    0 = H_p(K_0) \rightarrow H_p(K_1) \rightarrow \cdots \rightarrow H_p(K_n)
\end{equation*}
where $K_i$ are the sublevel subcomplexes. 

Consider now the superlevel subcomplexes $K^j = \{ \sigma \in K | f(v) \geq a_i \mbox{ for all } v \in \sigma \}$. The \textit{extended persistent homology of $K$ with respect to $f$} is defined as the persistent module
\begin{equation}\label{eq:4}
  H_p(K_0) \rightarrow \cdots \rightarrow H_p(K_n) = H_p(K) \rightarrow H_p(K, K^n) \rightarrow \cdots \rightarrow H_p(K, K^0) = 0
\end{equation}

Intuitively, for unextended PH, the complex $K$ is scanned from bottom to the top with respect to $f$, while in the extended version the scanner turns downward after reaching the top to harvest more information.

The barcode of this module consists of three types of intervals:

(i) those ending before or at the term $H_p(K)$

(ii) those starting after the term $H_p(K)$

(iii) those starting before or at the term $H_p(K)$, and ending after it\\
The intervals of type (i), (ii), (iii) are called \textit{ordinary}, \textit{relative}, and \textit{extended} respectively. The extended barcode contains strictly more information than the unextended one: the latter corresponds to ordinary intervals and the starting points of extended intervals.

The author of \cite{CohenSteiner2009ExtendingPH} proposed a method of computing extended PH. It relies on the notion of simplicial cones. Let $K$ be a simplicial complex, define the \textit{cone} of $K$ as the simplicial complex $C(K) = K \cup \{ \sigma \cup \{\ast\} | \sigma \in K \}$ where $\ast$ is a vertex outside $K$. Since the geometric realization of $C(K)$ is the cone $|K| \times I / |K| \times \{0\}$ (where $| \cdot |$ stands for geometric realization), we have $H_p(K, K^j) = \widetilde{H}_p (K / K^j) = \widetilde{H}_p(K \cup C(K^j))$, where $\widetilde{H}$ stands for reduced homology groups. Thus if all homology groups in (\ref{eq:4}) are replaced with the reduced versions, the resulted persistence module would be isomorphic to
\begin{equation*}
  \widetilde{H}_p(K_0) \rightarrow \cdots \rightarrow \widetilde{H}_p(K_n) = \widetilde{H}_p(K) \rightarrow \widetilde{H}_p(K \cup C(K^n)) \rightarrow \cdots \rightarrow \widetilde{H}_p(K \cup C(K^0))
\end{equation*}

This is the persistent homology of a filtration on the simplicial complex $C(K)$, and thus can be decomposed using (the reduced version of) standard PH algorithm.

\begin{remark}
 The function $f$ is needed only for applications in data analysis. In fact, the above discussion can be used to define and compute extended persistent homology for any two filtrations $\{K_i\}$ and $\{K^j\}$ on a simplicial complex $K$.
\end{remark}

\subsection{An Algebraic Reformulation}
The above treatment of extended PH relies on the simplicial setting for the usage of cones. This has at least two undesirable consequences. First, it forces one to use reduced homology, which hides the first component born with respect to the filtration. In most applications, all components should be treated in the same way, so this concealment is unwanted. Secondly, the simplicial assumption hinders generalizations, e.g., to persistent path homology or persistent hypergraph homology.
In this subsection, we give an algebraic reformulation of extended persistent homology that generalizes the standard extended PH. This generalization provides a framework for discussing extended PH of (filtration of) graded subgroups.

Let $\{C_\ast, \partial_\ast\}$ be a chain complex. Let
\begin{align*}
    S^1_\ast \hookrightarrow \cdots \hookrightarrow S^i_\ast \hookrightarrow S^{i+1}_\ast \hookrightarrow \cdots \hookrightarrow S^{M}_\ast \\
    T^1_\ast \hookrightarrow \cdots \hookrightarrow T^j_\ast \hookrightarrow T^{j+1}_\ast \hookrightarrow \cdots \hookrightarrow T^{N}_\ast
\end{align*}
be two filtrations of subcomplexes in $C_\ast$ such that $S^{M}_\ast = T^N_\ast$ (in general, $S^i_\ast$ and $T^i_\ast$ are not required to contain one another). Define the \textit{extended persistent homology} of $(\{S_\ast^i\}_{1 \leq i \leq M}, \{T_\ast^j\}_{1 \leq j \leq N})$ as the persistence modules
\begin{equation}\label{AlgebraicExtendedPH}
    H_p(S^1_\ast) \rightarrow \cdots \rightarrow H_p(S^M_\ast) \rightarrow H_p(S^M_\ast / T^1_\ast) \rightarrow \cdots \rightarrow H_p(S^M_\ast / T^N_\ast) = 0
\end{equation}

In order to compute the decomposition of (\ref{AlgebraicExtendedPH}), we need the notion of mapping cones of chain complexes. Let $\{C_\ast, \partial_\ast\}$, $\{C'_\ast, \partial'_\ast\}$ be chain complexes and $\varphi : C'_\ast \rightarrow C_\ast$ be a chain map. The \textit{mapping cone} of $\varphi$ is the chain complex $\mbox{Cone}(\varphi) = \{\overline{C}_\ast, \overline{\partial}_\ast\}$ where $\overline{C}_p = C'_{p-1} \oplus C_p$ and 
\begin{equation*}
\overline{\partial}_p(c',c) = (-\partial_{p-1}' (c'), \varphi(c') + \partial_p(c)) \in C'_{p-2} \oplus C_{p-1} \mbox{  for  } c \in C'_{p-1}, c \in C_p
\end{equation*}

As is the case with topological cones, there is an embedding $C_\ast \hookrightarrow \overline{C}_\ast$ taking $c \in C_p$ to $(0,c) \in \overline{C}_p$.
We are interested in the case where $\varphi$ is the inclusion of a subcomplex. When there is no ambiguity over $C_\ast$, we will denote $\mbox{Cone}(\varphi)$ by $\mbox{Cone}(C'_\ast)$. The following is an algebraic analogy of the contractibility of cones.
\begin{lemma}
Let $C'_\ast$ be a subcomplex of $C_\ast$. Denote $\mbox{Cone}(C'_\ast)$ by $\{\overline{C}_\ast, \overline{\partial}_\ast\}$. Then the map 
\begin{equation*}
\begin{matrix}
    h:H_p(C_\ast / C'_\ast) &\Longrightarrow & H_p(\overline{C}_\ast) \\
     [\Bar{x}]& \Longrightarrow &[(-\partial x, x)]
\end{matrix}
\end{equation*}
is an isomorphism for all $p$, where $\Bar{x} \in C_p / C'_p$ is represented by $x \in C_p$ and $[ \cdot ]$ means taking homology class.
\end{lemma}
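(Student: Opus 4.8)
The plan is to establish the isomorphism by hand at the chain level, exploiting the very rigid form that cycles of $\mbox{Cone}(C'_\ast)$ are forced to take. Write $\{\overline{C}_\ast,\overline{\partial}_\ast\}$ for $\mbox{Cone}(C'_\ast)$; since $\varphi$ is the inclusion of a subcomplex, $\partial'$ is just the restriction of $\partial$, and the differential reads $\overline{\partial}_p(c',c) = (-\partial c',\, c' + \partial c)$ for $c' \in C'_{p-1}$, $c \in C_p$. All of what follows is pure homological algebra, with no finiteness hypotheses needed.

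The first step is to confirm that $h$ is well-defined. If $\overline{x} \in C_p/C'_p$ is a cycle then $\partial x \in C'_{p-1}$, so $(-\partial x, x)$ is a legitimate element of $\overline{C}_p$, and plugging it into $\overline{\partial}$ gives $(\partial\partial x,\, -\partial x + \partial x) = (0,0)$, so it is a cycle. If $x$ is replaced by another representative of $[\overline{x}]$, say $x - x' = w + \partial z$ with $w \in C'_p$ and $z \in C_{p+1}$, then the difference $(-\partial x, x) - (-\partial x', x')$ equals $(-\partial w,\, w + \partial z) = \overline{\partial}_{p+1}(w,z)$, so the class in $H_p(\overline{C}_\ast)$ is unchanged. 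Linearity is immediate.

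The heart of the argument is then two short computations. For surjectivity, note that a cycle $(c',c) \in \overline{C}_p$ must satisfy $c' = -\partial c$ (this is exactly the vanishing of the second coordinate of $\overline{\partial}(c',c)$); consequently $\partial c = -c' \in C'_{p-1}$, so $\overline{c}$ is a cycle in $C_p/C'_p$, and $h([\overline{c}]) = [(-\partial c, c)] = [(c',c)]$. For injectivity, suppose $h([\overline{x}]) = 0$, i.e. $(-\partial x, x) = \overline{\partial}_{p+1}(a,b)$ for some $(a,b) \in C'_p \oplus C_{p+1}$; reading off the second coordinate gives $x = a + \partial b$ with $a \in C'_p$, hence $\overline{x} = \overline{\partial b} = \overline{\partial}\,\overline{b}$ is a boundary in $C_\ast/C'_\ast$, so $[\overline{x}] = 0$.

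I do not expect a genuine obstacle here; the only real care needed is keeping the signs and the degree shift in $\overline{C}_p = C'_{p-1} \oplus C_p$ straight, which I would pin down once and for all by writing $\overline{\partial}$ out explicitly in low degrees. As a remark, a more structural proof is also available: the inclusion $c \mapsto (0,c)$ exhibits $C_\ast$ as a subcomplex of $\overline{C}_\ast$ whose quotient is the shifted complex $C'_\ast[-1]$, and one could compare the resulting long exact sequence with the long exact sequence of the pair $(C_\ast, C'_\ast)$ and invoke the five lemma; but that route forces one to separately identify the connecting homomorphism with the map $H_{p-1}(C'_\ast) \to H_{p-1}(C_\ast)$ induced by inclusion, so the direct verification above is the more economical one.
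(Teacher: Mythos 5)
Your proof is correct and takes essentially the same route as the paper's: the same sign-conscious chain-level computation for well-definedness, and the same two observations (a cycle $(c',c)$ forces $c'=-\partial c$, and a bounding $(-\partial x,x)=\overline{\partial}(a,b)$ forces $\overline{x}=\overline{\partial b}$). The only difference is cosmetic: you phrase the second half as surjectivity plus injectivity, whereas the paper packages the identical calculations as the construction of an explicit two-sided inverse $[(y,x)]\mapsto[\overline{x}]$.
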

\begin{proof}

We begin with checking this is well-defined. Let $\partial_0$ denote the boundary map of $C_\ast / C'_\ast$.
For $\Bar{x} \in \mbox{Ker}(\partial_0)$, we have $\partial x \in C'_{p-1}$, hence $(-\partial x, x) \in \overline{C}_p$. Since $\partial (-\partial x, x) = (\partial (\partial x), -\partial x +\partial x) = 0$, the element $[(-\partial x, x)]$ is well-defined. It remains to check the independence of choice of $x$. Let $x_1,x_2 \in C_p$ such that $[\overline{x_1}] = [\overline{x_2}]$. Then $x_1-x_2 =\partial y + z$, for $y \in C_{p+1}, z \in C'_p$. Apply $\partial$, we get $\partial x_1 - \partial x_2 =\partial z$. Hence
\begin{equation*}
    \begin{split}
        (-\partial x_1, x_1) - (-\partial x_2, x_2)  & = (-\partial (x_1 -x_2), x_1 -x_2) \\
                                                     & = (-\partial z , \partial y +z ) \\
                                                     & = \partial (z, y)
    \end{split}
\end{equation*}
and $[(-\partial x_1, x_1)] =[(-\partial x_2, x_2)]$.

We now construct the inverse of $h$. Let $[(y,x)]$ be an element of $H_p(\overline{C}_\ast)$. We have $\partial(y,x) = (-\partial y, y + \partial x) =0$, thus $\partial x = -y \in C'_{p-1}$ and $[\Bar{x}] \in H_p(C_\ast / C'_\ast)$. If $[(-\partial x_1,x_1)] = [(-\partial x_2,x_2)] \in H_p(\overline{C}_\ast)$, then $(-\partial x_1, x_1) - (-\partial x_2,x_2) = (-\partial (x_1 - x_2),x_1 - x_2) = \partial (u,v)$ for some $u \in C'_p, v \in C_{p+1}$. This implies $x_1 - x_2 = u + \partial v$, and thus $[\overline{x_1}]=[\overline{x_2}]$. Hence the map 
\begin{equation*}
\begin{matrix}
    H_p(\overline{C}_\ast) &\Longrightarrow & H_p(C_\ast / C'_\ast) \\
     [(y, x)]& \Longrightarrow & [\Bar{x}]
\end{matrix}
\end{equation*}
is well-defined. It is easy to check that this is inverse to $h$.
\end{proof}

From the construction in the above proof, we see the isomorphism $h$ is natural with respect to $C'_\ast$: if $C^1_\ast \subset C^2_\ast$ are two subcomplexes of $C_\ast$, then $\overline{C}^1_\ast = \mbox{Cone}(C^1_\ast)$ is a subcomplex of $\overline{C}^2_\ast=\mbox{Cone}(C^2_\ast)$
and the diagram

\begin{tikzcd}
 H_\ast (C_\ast / C^1_\ast) \arrow[r, "h"] \arrow[d] & H_\ast (\overline{C}^1_\ast) \arrow[d]\\
 H_\ast (C_\ast / C^2_\ast) \arrow[r, "h"]         & H_\ast (\overline{C}^2_\ast)
\end{tikzcd} \\
commutes, where the left vertical map is induced by natural projection while the right vertical map by inclusion.
This implies:
\begin{corollary} \label{Corollary1}
Suppose $\{S_\ast^i\}_{1 \leq i \leq M}, \{T_\ast^j\}_{1 \leq j \leq N}$ are two filtrations of subcomplexes of $C_\ast$ such that $S^M_\ast = T^N_\ast$. Denote by $\overline{T}^j_\ast$ the mapping cone of $T^j_\ast \hookrightarrow S^M_\ast$. Then $S^1_\ast \subset \cdots \subset S^M_\ast \subset \overline{T}^1_\ast \subset \cdots \subset \overline{T^N_\ast}$ is a filtration on $\overline{T}^N_\ast$, and the persistence module (\ref{AlgebraicExtendedPH}) is isomorphic to 
\begin{equation} \label{ConeSubcomplexes}
    H_p(S^1_\ast) \rightarrow \cdots \rightarrow H_p(S^M_\ast) \rightarrow H_p(\overline{T}^1_\ast) \rightarrow \cdots \rightarrow H_p(\overline{T}^N_\ast) = 0
\end{equation} \qed
\end{corollary}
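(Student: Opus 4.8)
The plan is to assemble Corollary~\ref{Corollary1} from the preceding lemma together with the naturality square that follows it. First I would verify the structural claim that $S^1_\ast \subset \cdots \subset S^M_\ast \subset \overline{T}^1_\ast \subset \cdots \subset \overline{T}^N_\ast$ is indeed a filtration of subcomplexes of $\overline{T}^N_\ast$. The inclusions among the $S^i_\ast$ are given; the inclusion $S^M_\ast \hookrightarrow \overline{T}^1_\ast$ is the canonical embedding $c \mapsto (0,c)$ from the definition of the mapping cone (recalling $S^M_\ast = T^N_\ast$, so $S^M_\ast$ maps into every $\overline{T}^j_\ast$); and the inclusions $\overline{T}^j_\ast \hookrightarrow \overline{T}^{j+1}_\ast$ come from applying $\mathrm{Cone}(-)$ to the inclusion $T^j_\ast \hookrightarrow T^{j+1}_\ast$, which is exactly the functoriality noted in the paragraph before the corollary (there stated for a pair $C^1_\ast \subset C^2_\ast$, applied here with $C_\ast = S^M_\ast$). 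One checks these embeddings are chain maps, which is immediate from the explicit formula for $\overline{\partial}$.

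Next I would produce the isomorphism of persistence modules term by term. For the terms $H_p(S^i_\ast)$, $1 \le i \le M$, use the identity. For the terms indexed by $j$, apply the Lemma with $C_\ast = S^M_\ast$ and $C'_\ast = T^j_\ast$: this gives isomorphisms $h^j\colon H_p(S^M_\ast / T^j_\ast) \xrightarrow{\ \sim\ } H_p(\overline{T}^j_\ast)$. It remains to check that these isomorphisms commute with the structure maps of the two persistence modules, i.e.\ that all the relevant squares commute. The squares internal to the $S$-part are trivial. The square joining $H_p(S^M_\ast)$ to the first cone term commutes because under $h^1$ the map $H_p(S^M_\ast) \to H_p(S^M_\ast/T^1_\ast)$ (induced by the projection) corresponds to the map $H_p(S^M_\ast) \to H_p(\overline{T}^1_\ast)$ induced by $c \mapsto (0,c)$: chasing a cycle $x \in S^M_\ast$, the projection sends it to $\bar x$, and $h^1(\bar x) = [(-\partial x, x)] = [(0,x)]$ since $\partial x = 0$, so the two agree on the nose. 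The squares joining consecutive cone terms commute by exactly the naturality diagram displayed just before the corollary statement, with $C^1_\ast = T^j_\ast$, $C^2_\ast = T^{j+1}_\ast$: the left vertical map there is the projection $H_\ast(C_\ast/C^1_\ast) \to H_\ast(C_\ast/C^2_\ast)$, which is the structure map of (\ref{AlgebraicExtendedPH}), and the right vertical is the inclusion-induced $H_\ast(\overline{T}^j_\ast) \to H_\ast(\overline{T}^{j+1}_\ast)$, which is the structure map of (\ref{ConeSubcomplexes}).

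Having a degreewise isomorphism commuting with all structure maps is precisely an isomorphism of persistence modules in the sense defined in the Preliminaries, so this completes the argument. I expect the only real content to be the commutativity of the bridging square at $H_p(S^M_\ast)$ and the bookkeeping needed to invoke the already-established naturality square in the correct generality; everything else is formal. The main (mild) obstacle is simply making sure the orientation and sign conventions in the mapping-cone differential are consistent with the embedding $c \mapsto (0,c)$ and with the formula $[\bar x] \mapsto [(-\partial x, x)]$ from the Lemma, so that the bridging square genuinely commutes rather than commuting up to a sign; since on cycles $\partial x = 0$ kills the sign, this works out, but it is worth stating explicitly.
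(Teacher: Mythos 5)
Your proposal is correct and matches the paper's intent exactly: the paper states this corollary with a \qed, leaving it as an immediate consequence of the preceding lemma and the naturality square, which is precisely the argument you spell out (including the key bridging square at $H_p(S^M_\ast)$, where $\partial x = 0$ makes $h^1(\bar{x})=[(0,x)]$).
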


If we are given compatible basis for both $\{S_\ast^i\}_{1 \leq i \leq M}$ and $\{T_\ast^j\}_{1 \leq j \leq N}$, we can write down a compatible basis for $S^1_\ast \subset \cdots \subset S^M_\ast \subset \overline{T}^1_\ast \subset \cdots \subset \overline{T}^N_\ast$ and compute the interval decomposition of (\ref{ConeSubcomplexes}) by the standard PH algorithm (Section \ref{StandardPHAlgorithm}). Again, we postpone the detail here to the next subsection, where we study a generalized version.

\subsection{Extended PH for Graded Subgroups }
$\label{extended PH for Graded Subgroups} $\label{Extended PH for Graded Subgroups}
We are interested in generalizing extended PH to filtrations of graded subgroups. The first step is to define relative homology in this setting.

Let $E_\ast \subset D_\ast$ be graded subgroups of a chain complex $C_\ast$. Define the \textit{relative supremum homology group} as 
\begin{equation*}
    H_p^{sup}(D_\ast, E_\ast)=H_p^{sup}(D_\ast, E_\ast;C_\ast) \overset{\Delta}{=}H_p(S_\ast, T_\ast) = H_p(S_\ast / T_\ast)
\end{equation*}
 where $S_\ast, T_\ast$ denote the supremum complexes of $D_\ast, E_\ast$ in $C_\ast$ respectively. The relative infimum homology group $H_p^{inf}(D_\ast, E_\ast;C_\ast)$ is defined analogously. An easy argument by the Five Lemma shows that $H_p^{sup}(D_\ast, E_\ast)$ is naturally isomorphic to $H_p^{inf}(D_\ast, E_\ast)$. By Remark \ref{Graded Subgroups of a Chain Complex}, the relative homology groups are not affected by replacing $C_\ast$ with a larger chain complex. Naturality with respect to inclusion is also available: if $D^i_\ast,E^i_\ast,i=1,2$ are graded subgroups of $C_\ast$ and $D^1_\ast \subset D^2_\ast, E^1_\ast \subset E^2_\ast,  E^i_\ast \subset D^i_\ast,i=1,2$, then there is a canonical homomorphism $H^{sup}_p(D^1,E^1) \rightarrow H^{sup}_p(D^2,E^2)$ induced by inclusion.

To compute relative homology, we need the notion of cones for graded subgroups. Denote the mapping cone $\mbox{Cone}(id: C_\ast \rightarrow C_\ast)$ by $\overline{C}_\ast$. Define the \textit{mapping cone} of $E_\ast \hookrightarrow D_\ast$ as the graded subgroup $\mbox{Cone}(E_\ast \hookrightarrow D_\ast) = \overline{E}_\ast$ of $\overline{C}_\ast$, where $\overline{E}_p = E_{p-1} \oplus D_p \subseteq \overline{C}_p$ for each $p$. The following proposition shows that the cone construction commutes with taking supremum subcomplex.

\begin{prop} \label{Commute}
In the above setting, let $S_\ast$ (resp. $T_\ast$) be the supremum subcomplex of $D_\ast$ (resp. $E_\ast$) and set $\overline{T}_\ast = \mbox{Cone}(T_\ast \hookrightarrow S_\ast)$. Then with respect to the natural embedding $\overline{T}_\ast \hookrightarrow \overline{C}_\ast$, $\overline{T}_\ast$ is the supremum subcomplex of $\overline{E}_\ast = \mbox{Cone}(E_\ast \hookrightarrow D_\ast)$ in $\overline{C}_\ast$.
\end{prop}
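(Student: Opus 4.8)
The plan is to unwind both sides of the claimed equality directly from the definitions of supremum subcomplex and mapping cone, working degree by degree. Recall that for a graded subgroup $G_\ast \subseteq C_\ast$, the supremum subcomplex is $S_p(G_\ast) = G_p + \partial_{p+1} G_{p+1}$. So on the one hand, the supremum subcomplex of $\overline{E}_\ast$ inside $\overline{C}_\ast$ in degree $p$ is $\overline{E}_p + \overline{\partial}_{p+1} \overline{E}_{p+1}$, where $\overline{E}_p = E_{p-1} \oplus D_p$ and $\overline{\partial}$ is the cone differential $\overline{\partial}_{p+1}(c',c) = (-\partial c', c' + \partial c)$ (with $id$ as the chain map). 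On the other hand, $\overline{T}_p = \mathrm{Cone}(T_\ast \hookrightarrow S_\ast)_p = T_{p-1} \oplus S_p$, where $T_\ast, S_\ast$ are the supremum subcomplexes of $E_\ast, D_\ast$. So the whole statement reduces to the equality of subspaces of $C_{p-1} \oplus C_p$:
\begin{equation*}
\bigl(E_{p-1} \oplus D_p\bigr) + \overline{\partial}_{p+1}\bigl(E_p \oplus D_{p+1}\bigr) \;=\; \bigl(E_{p-1} + \partial E_p\bigr) \oplus \bigl(D_p + \partial D_{p+1}\bigr).
\end{equation*}

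First I would compute the left-hand side explicitly. Applying $\overline{\partial}_{p+1}$ to a general element $(e', d) \in E_p \oplus D_{p+1}$ gives $(-\partial e', e' + \partial d)$. So the left-hand side is $\{(a + (-\partial e'),\; b + e' + \partial d) : a \in E_{p-1}, b \in D_p, e' \in E_p, d \in D_{p+1}\}$. I would then show this set equals the right-hand side by two inclusions. The inclusion ``$\subseteq$'' is immediate: $-\partial e' \in \partial E_p$ so the first coordinate lies in $E_{p-1} + \partial E_p$; and $e' \in E_p \subseteq D_p$ (since $E_\ast \subseteq D_\ast$) while $\partial d \in \partial D_{p+1}$, so the second coordinate lies in $D_p + \partial D_{p+1}$. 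For ``$\supseteq$'', given $(u, v)$ with $u = a + \partial e'$ ($a \in E_{p-1}$, $e' \in E_p$) and $v = b + \partial d$ ($b \in D_p$, $d \in D_{p+1}$), I would realize it as the image of $(a,\, b - e',\, e',\, d)$ under the parametrization above, noting $b - e' \in D_p$ since $e' \in E_p \subseteq D_p$; the first coordinate is then $a - \partial e'$ — so I should instead take $e'$ replaced by $-e'$ in the parametrization, i.e. use the element $-e' \in E_p$, giving first coordinate $a + \partial e' = u$ and second coordinate $(b+e') + (-e') + \partial d = b + \partial d = v$. This works because $E_p$ is a vector space, closed under negation.

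The only real subtlety — and the step I expect to require the most care in writing up — is checking that this degreewise equality is compatible with the differentials, i.e. that $\overline{T}_\ast$ as I have described it really is a subcomplex of $\overline{C}_\ast$ and that its differential agrees with the restriction of $\overline{\partial}$; but this is already guaranteed by Remark \ref{RemarkGradedSubgroups} (the supremum subcomplex of any graded subgroup is automatically a subcomplex) once the degreewise identity of underlying spaces is established, together with the observation that the cone differential on $\overline{S}_\ast = \mathrm{Cone}(S_\ast)$ restricted to $\overline{T}_p = T_{p-1} \oplus S_p$ is exactly $\mathrm{Cone}(T_\ast \hookrightarrow S_\ast)$'s differential by construction. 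I would close by remarking that the key algebraic facts making the computation go through are precisely $E_\ast \subseteq D_\ast$ (used to place $e'$ into $D_p$) and that $\partial^2 = 0$ plays no role here — unlike in Proposition 2.6 — since we are only comparing underlying graded vector spaces. One small point to state carefully: the sign conventions in the definition of $\overline{\partial}$ must be tracked consistently, but since $E_p$ and $D_{p+1}$ are closed under scalar multiplication the signs never obstruct the set equality.
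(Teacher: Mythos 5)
Your proposal is correct and follows essentially the same route as the paper's own proof: reduce the statement to a degreewise equality of subspaces of $C_{p-1}\oplus C_p$, observe the containment $\overline{E}_p+\overline{\partial}\,\overline{E}_{p+1}\subseteq\overline{T}_p$ directly, and establish the reverse containment by exhibiting any $(a+\partial e',\,b+\partial d)$ as $(a,\,b+e')+\overline{\partial}(-e',\,d)$, using $E_p\subseteq D_p$ to place $b+e'$ back in $D_p$ — this is exactly the paper's calculation with $(x,y+z)+\partial(-y,w)$ under the dictionary $(x,y,z,w)\leftrightarrow(a,e',b,d)$. Your closing paragraph about compatibility of differentials is harmless but not really needed, since both $\overline{T}_\ast$ and the supremum subcomplex inherit their differentials by restriction from $\overline{C}_\ast$, so the degreewise identity of subspaces is the whole content of the proposition.
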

\begin{proof}
We need to check that $\overline{T}_p$ coincide with $\overline{E}_p + \partial \overline{E}_{p+1}$. By definition,
\begin{equation*}
    \overline{T}_p = T_{p-1} \oplus S_p = (E_{p-1} + \partial E_p) \oplus (D_p + \partial D_{p+1}) \\
\end{equation*}
while
\begin{equation*}
    \overline{E}_p + \partial \overline{E}_{p+1} = E_{p-1} \oplus D_p + \partial (E_p \oplus D_{p+1})
\end{equation*}
It is thus obvious that $\overline{E}_p + \partial \overline{E}_{p+1} \subset \overline{T}_p$. Conversely, take $(x+\partial y, z+\partial w) \in \overline{T}_p$, where $x \in E_{p-1}, y \in E_p, z \in D_p, w \in D_{p+1}$. We compute
\begin{equation*}
    \begin{split}
       (x, y+z) + \partial(-y, w)                    & = (x, y+z) +(\partial y, -y + w)\\
                                                     & = (x+\partial y, z+\partial w)
    \end{split}
\end{equation*}

Since $E_p \subset D_p$, we have $(x, y+z) + \partial(-y, w) \in  \overline{E}_p + \partial \overline{E}_{p+1}$.
\end{proof}

Suppose now are given a chain complex $C_\ast$ and two filtrations of graded subgroups
\begin{align*}
    D^1_\ast \hookrightarrow \cdots \hookrightarrow D^i_\ast \hookrightarrow D^{i+1}_\ast \hookrightarrow \cdots \hookrightarrow D^{M}_\ast \\
    E^1_\ast \hookrightarrow \cdots \hookrightarrow E^j_\ast \hookrightarrow E^{j+1}_\ast \hookrightarrow \cdots \hookrightarrow E^{N}_\ast
\end{align*}
with $D^{M}_\ast = E^{N}_\ast$. As is expected, the $p$-dimensional extended persisitent homology of $(\{D_\ast^i\}_{1 \leq i \leq M}, \{E_\ast^j\}_{1 \leq j \leq N})$ is defined as the persistence module
\begin{equation} \label{ExtendedPHGradedSubgroupsDef}
    H_p^{sup}(D^1_\ast) \rightarrow \cdots \rightarrow H_p^{sup}(D^M_\ast) \rightarrow H_p^{sup}(D^M_\ast, E^1_\ast) \rightarrow \cdots \rightarrow H_p^{sup}(D^M_\ast, E^N_\ast) = 0
\end{equation}
where all maps are induced by inclusion.

In other words, if $S^i_\ast$ and $T^j_\ast$ ($1 \leq i \leq M, 1 \leq j \leq N$) denote the supremum complexes of $D^i_\ast$ and $E^j_\ast$ respectively, then (\ref{ExtendedPHGradedSubgroupsDef}) is just the extended PH of $(\{S_\ast^i\}_{1 \leq i \leq M}, \{T_\ast^j\}_{1 \leq j \leq N})$.
Note that if $H^{\inf}$ is used in (\ref{ExtendedPHGradedSubgroupsDef}) instead of $H^{\sup}$, we would get an persistence module isomorphic to (\ref{ExtendedPHGradedSubgroupsDef}).

Our goal is to compute the interval decomposition of (\ref{ExtendedPHGradedSubgroupsDef}). Define $\overline{E}^j_\ast = \mbox{Cone}(E^j_\ast \hookrightarrow D^M_\ast) \subset \overline{C}_\ast = \mbox{Cone}(id:C_\ast \rightarrow C_\ast)$ and identify $D_\ast^i$ with $0 \oplus D^i_\ast \subset \overline{C}_\ast$. Then
\begin{equation}\label{FiltrationGradedSubgroups}
    D^1_\ast \subset \cdots \subset D^M_\ast \subset \overline{E}^1_\ast \subset \cdots \subset \overline{E}^N_\ast
\end{equation}
is a filtration of graded subgroups of $\overline{C}_\ast$. By Proposition \ref{Commute}, taking supremum complexes in (\ref{FiltrationGradedSubgroups}) gives
\begin{equation*}
    S^1_\ast \subset \cdots \subset S^M_\ast \subset \overline{T}^1_\ast \subset \cdots \subset \overline{T}^N_\ast
\end{equation*}

By definition, the persistence module (\ref{ExtendedPHGradedSubgroupsDef}) is the same as (\ref{AlgebraicExtendedPH}), which is in turn isomorphic (Corollary \ref{Corollary1}) to (\ref{ConeSubcomplexes}). The above discussion shows that (\ref{ConeSubcomplexes}) is the persistent homology of the filtration (\ref{FiltrationGradedSubgroups}), so we can apply the method of Section \ref{ComputePHbySup}.

\subsection{Algorithm for Extended PH}
We are now ready to present an algorithm for computing extended PH of graded subgroups. The inputs are filtrations
\begin{align*}
    D^1_\ast \hookrightarrow \cdots \hookrightarrow D^i_\ast \hookrightarrow D^{i+1}_\ast \hookrightarrow \cdots \hookrightarrow D^{M}_\ast \\
    E^1_\ast \hookrightarrow \cdots \hookrightarrow E^j_\ast \hookrightarrow E^{j+1}_\ast \hookrightarrow \cdots \hookrightarrow E^{N}_\ast
\end{align*}
of graded subgroups of $C_\ast$ with $D^{M}_\ast = E^{N}_\ast$, and compatible basis $\{d_p^1, \cdots, d_p^{m_p}\}$, $\{e_p^1, \cdots, e_p^{m_p}\}$ of $\{D^i_\ast\}_{1\leq i \leq M}$, $\{E^j_\ast\}_{1\leq j \leq N}$ respectively.  Then
\begin{equation*}
\{(0,d_p^1), \cdots ,(0,d_p^{m_p}), (e_{p-1}^1,0), \cdots, (e_{p-1}^{m_{p-1}},0)\}    
\end{equation*}
is a compatible basis for (\ref{FiltrationGradedSubgroups}) at dimension $p$. In practice, $\{e_p^1, \cdots, e_p^{m_p}\}$ is usually a permutation away from $\{d_p^1, \cdots, d_p^{m_p}\}$.

Since $D^{M}_\ast = E^{N}_\ast$, for each $p$ there is a common extension of basis, that is, a set $B_p$ such that both $\{d_p^1, \cdots, d_p^{m_p}\}\cup B_p$ and $\{e_p^1, \cdots, e_p^{m_p}\} \cup B_p$ are basis of $C_p$. Denote by $\varepsilon_p^1, \cdots, \varepsilon_p^{l_p}$ the elements of $B_p$ that appears in the boudanry of $d_{p+1}^i$'s or $e_{p+1}^j$'s. Let $\overline{\partial}_\ast$ be the boundary map of $\overline{C}_\ast$, and define $A_{p+1}$ as the matrix of $\overline{\partial}_{p+1}$ with respect to 
\begin{equation*}
    (0,d_{p+1}^1), \cdots ,(0,d_{p+1}^{m_{p+1}}), (e_{p}^1,0), \cdots, (e_{p}^{n_{p}},0)
\end{equation*}
in the domain and
\begin{multline*}
    (0,d_p^1), \cdots ,(0,d_p^{m_p}), (e_{p-1}^1,0), \cdots, (e_{p-1}^{n_{p-1}},0),(0,\varepsilon^1_p), \cdots, (0,\varepsilon^{l_p}_p),\\
     (\varepsilon^1_{p-1}, 0), \cdots, (\varepsilon^{l_{p-1}}_{p-1}, 0)
\end{multline*}
in the codomain.

Reduce $A_{p+1}$ by left-to-right additions of columns and record the resulted pairing. Let $ht, ht'$ denote the height function with respect to $\{D^i_\ast \}$ and $\{E^j_\ast \}$ respectively. Then we have:
\begin{theorem}
The interval decomposition of \ref{ExtendedPHGradedSubgroupsDef} is determined by the pairings in the following way:

(i) each pair $((0,d_p^i),(0,d_{p+1}^j))$ with $ht(d_p^i) < ht(d_{p+1}^i)$ contributes an interval spanning from $H_p^{sup}(D_\ast^{ht(d_p^i)})$ to $H_p^{sup}(D_\ast^{ht(d_{p+1}^j)-1})$.

(ii) each pair $((e_p^i,0),(e_{p+1}^j,0))$ with $ht'(e_p^i) < ht'(e_{p+1}^j)$ contributes an interval spanning from $H_p^{sup}(E_\ast^{ht'(e_p^i)})$ to $H_p^{sup}(E_\ast^{ht'(e_{p+1}^j)-1})$.

(iii) each pair $((e_p^i,0),(0,d_{p+1}^j))$ contributes an interval from $H_p^{sup}(D_\ast^{ht(d_p^i)})$ to $H_p^{sup}(E_\ast^{ht'(e_{p+1}^j)-1})$. \qed

\end{theorem}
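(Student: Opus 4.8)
### Proof Proposal

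The plan is to reduce this theorem entirely to the previously-established theorem of Section \ref{ComputePHbySup} on persistent homology of filtrations of graded subgroups, applied to the cone filtration (\ref{FiltrationGradedSubgroups}). By the discussion immediately preceding this statement, the persistence module (\ref{ExtendedPHGradedSubgroupsDef}) is isomorphic to the persistent supremum homology of the filtration $D^1_\ast \subset \cdots \subset D^M_\ast \subset \overline{E}^1_\ast \subset \cdots \subset \overline{E}^N_\ast$ of graded subgroups of $\overline{C}_\ast$, and the matrix $A_{p+1}$ defined above is exactly (an instance of) the matrix appearing in that earlier algorithm, since by Proposition \ref{Commute} the boundary map of the supremum complex is encoded in $\overline{\partial}$. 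So the earlier theorem already tells us: each unpaired basis element with zero reduced column contributes a half-infinite interval starting at its height, and each pair $(f_p^i, f_{p+1}^j)$ with $ht(f_p^i) < ht(f_{p+1}^j)$ contributes $[ht(f_p^i), ht(f_{p+1}^j))$, where heights are measured in the filtration (\ref{FiltrationGradedSubgroups}).

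First I would set up the height bookkeeping for the combined filtration (\ref{FiltrationGradedSubgroups}). Indexing the $M+N$ stages as $1,\dots,M,M+1,\dots,M+N$, the basis element $(0,d_p^i)$ enters at stage $ht(d_p^i)$ (with $ht$ the height in $\{D_\ast^i\}$), since $(0,D_\ast^i)$ is the $i$-th stage; and the basis element $(e_{p-1}^j,0)$, which lives in $\overline{E}^\bullet_\ast$, enters at stage $M + ht'(e_{p-1}^j)$, because $(e_{p-1}^j,0) \in \overline{E}^k_p = E^k_{p-1} \oplus D^M_p$ iff $e_{p-1}^j \in E^k_{p-1}$, i.e. iff $k \geq ht'(e_{p-1}^j)$. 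The extension elements $(0,\varepsilon_p^s)$ and $(\varepsilon_{p-1}^t,0)$ play the role of the $B_p$-part in the earlier algorithm and carry no height. Next I would translate the three pairing cases. A pair of $D$-type elements $((0,d_p^i),(0,d_{p+1}^j))$ gives, by the earlier theorem, the interval from stage $ht(d_p^i)$ to stage $ht(d_{p+1}^j)-1$ inclusive — i.e. half-open $[ht(d_p^i), ht(d_{p+1}^j))$ in the $D$-indexing — which is case (i); here we need $ht(d_p^i) < ht(d_{p+1}^j)$, matching the stated hypothesis, and one should note Proposition \ref{Clearing} / the remark after it to see a $D$-element can also be paired via $\overline{\partial}$ only with another $D$-element in a way consistent with this (the cone boundary $\overline\partial(0,d) = (0,\partial d)$ stays in the $D$-block). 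A pair of $E$-type elements $((e_p^i,0),(e_{p+1}^j,0))$ spans stages $M + ht'(e_p^i)$ to $M + ht'(e_{p+1}^j) - 1$, i.e. an interval lying entirely in the relative part of (\ref{ExtendedPHGradedSubgroupsDef}); translating stage $M+k$ back to the term $H_p^{sup}(D_\ast^M, E_\ast^k)$ gives case (ii) — note the minus-one is absorbed because stage $M+k$ of the cone filtration corresponds (after the isomorphism of Corollary \ref{Corollary1} and the cone lemma) to $H_p^{sup}(D_\ast^M, E_\ast^k)$, whose barcode contribution therefore runs through $E_\ast^{ht'(e_{p+1}^j)-1}$. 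Finally a mixed pair $((e_p^i,0),(0,d_{p+1}^j))$: the creator $(e_p^i,0)$ — wait, one must be careful about which element is the creator. Looking at $\overline\partial(0,d_{p+1}^j) = (0,\partial d_{p+1}^j)$, this never has an $E$-component, so a mixed pair must instead come from $\overline\partial(e_p^i,0)$ containing a $D$-component; indeed $\overline\partial(e_{p-1}^i,0) = (-\partial e_{p-1}^i, e_{p-1}^i)$, so after reduction the pivot of a column indexed by an $E$-type basis element can land in the $D$-block. Hence the creator is the $D$-type element $(0,d_p^i)$ (whose column got cleared, cf. Proposition \ref{Clearing}) at stage $ht(d_p^i)$, and the killer is the $E$-type element $(e_p^i,0)$ — I should write the indices carefully in the final draft — entering at stage $M + ht'(\cdot)$; the resulting interval runs from $H_p^{sup}(D_\ast^{ht(d_p^i)})$ to $H_p^{sup}(D_\ast^M, E_\ast^{ht'(\cdot)-1})$, which is case (iii). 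Reindexing these stage-intervals via the isomorphism of Corollary \ref{Corollary1} (or directly via the cone lemma of this subsection) between stages $M+1,\dots,M+N$ of the cone filtration and the relative terms $H_p^{sup}(D_\ast^M,E_\ast^1),\dots,H_p^{sup}(D_\ast^M,E_\ast^N)$ yields exactly the three families of intervals claimed.

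I expect the main obstacle to be not any deep homological argument — everything genuinely new was already proved in Section \ref{ComputePHbySup} and in the cone lemma — but rather the careful reconciliation of indices across the three reparametrizations at play: (a) the height functions $ht$ (in $\{D_\ast^i\}$) and $ht'$ (in $\{E_\ast^j\}$) versus the single height function on the length-$(M+N)$ filtration (\ref{FiltrationGradedSubgroups}); (b) the off-by-one in $[a,b)$ versus "spanning from stage $a$ to stage $b-1$"; and (c) the identification, coming from the cone lemma and its naturality, of the $k$-th relative term $H_p^{sup}(D_\ast^M, E_\ast^k)$ with the $(M+k)$-th homology group of the cone filtration, which is where the "$-1$" in the endpoints of cases (ii) and (iii) ultimately comes from. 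A secondary subtlety worth spelling out is that in the mixed case the creator is always a $D$-block element while the killer is an $E$-block element, never the reverse — this is forced by the shape $\overline\partial(0,d)=(0,\partial d)$ of the cone differential on the $D$-block, and it is what guarantees the interval of case (iii) straddles the term $H_p^{sup}(D_\ast^M)$, making it the "extended"-type interval. Once these indexing conventions are pinned down, the proof is a direct transcription of the earlier theorem's conclusion, so I would keep the write-up short and cite Theorem of Section \ref{ComputePHbySup}, Proposition \ref{Commute}, and Corollary \ref{Corollary1} as the substantive inputs.
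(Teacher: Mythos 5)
Your proposal is correct and takes exactly the route the paper intends: the paper gives no proof for this theorem (the $\qed$ sits inside the statement), relying instead on the reduction established in the preceding paragraphs to the theorem of Section \ref{ComputePHbySup} applied to the cone filtration (\ref{FiltrationGradedSubgroups}), together with Proposition \ref{Commute} and Corollary \ref{Corollary1} — which is precisely what you reconstruct, with the correct height bookkeeping (height $ht(d_p^i)$ for $(0,d_p^i)$ and height $M+ht'(e_{p-1}^j)$ for $(e_{p-1}^j,0)$). You also correctly observe that in a mixed pair the $D$-type element must be the row (creator) and the $E$-type element the column (killer), because $\overline{\partial}(0,d)=(0,\partial d)$ has no $E$-component; this exposes that the pair in case (iii) of the statement is written in the wrong order, and that the dimension indices and the shorthand $H_p^{\sup}(E_\ast^k)$ (for $H_p^{\sup}(D_\ast^M,E_\ast^k)$) in (ii) and (iii) are similarly in need of adjustment, as you note.
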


\section{Stability}
The application of a mathematical tool in data analysis typically demands its stability. That is, small perturbation in input data should result in small change in the output. The stability of unextended persistent (simplicial) homology (c.f. \cite{CT} p.182) has been documented ever since the early days of the theory of PH. Stability is often formulated as an inequality stating that the bottleneck distance is not greater than the interleaving distance. The stability for persistent path homology and hypergraph homology, our principal examples, are documented in \cite{Chowdhury18} and \cite{ren2020stability} respectively. We intend to generalize these to the case of extended persistence. In \cite{chazal2016structure} (Section 6.2), the stability for extended persistent simplicial homology is briefly discussed. We will present a stability theorem (Theorem \ref{Stability}) for extended persistence modules (defined later) in general, and apply this theorem to the extended persistence of path homology and hypergraph homology.

In Section \ref{Continuous Extended Persistence Modules}, we present the basic notions necessary for formulating the stability results. In Section \ref{Bottleneck Distance and Interleaving}, we discuss the bottleneck distance and interleaving distance in the extended setting, which are measurements for small changes and prove the stability theorem (Theorem \ref{Stability}). In the remaining two subsections, we state and proof stability theorems for path homology and hypergraph homology respectively.

\subsection{Continuous Extended Persistence Modules} \label{Continuous Extended Persistence Modules}

 Define $\mathbb{R}^o$ as the poset $\mathbb{R}$ with reversed order. For $b \in \mathbb{R}$, denote the corresponding element of $\mathbb{R}^o$ as $\Bar{b}$. Let $E = \mathbb{R} \cup \{\infty\} \cup \mathbb{R}^o$ with the ordering $s < \infty < \Bar{t}$ for all $s,t \in \mathbb{R}$. A \textit{(continuous) extended persistence module} $V$ is a functor from the poset $E$ (regarded as a category) to the category $\mbox{vect}_k$ of finite dimensional vector spaces over $k$. In other words, each $x \in E$ is assigned a vector space $V_x$ and each pair $x < y$ a linear map $V_{x,y}: V_x \rightarrow V_y$ such that

(i) $V_{x,x} = id$

(ii) $V_{x,z} = V_{x,y} \circ V_{y,z}$\\
The definition of morphism, direct sum and interval module are analogous to the discrete case. Note that here intervals are those with respect to the poset $E$ (e.g., $(3,\Bar{2}], (\infty, \Bar{1})$).

Here is how extended persistence modules arise from discrete ones defined earlier in this paper.
\begin{exmp} \label{Example}
Given the persistence module (\ref{ExtendedPHGradedSubgroupsDef}) and sequences of real numbers $a_1 < \cdots < a_M, b_1 > \cdots > b_N$, we can define a extended persistence module $V$ by
\begin{equation*}
    V_x=
    \begin{cases}
    0 & x < a_1 \\
    H_p(D^i) & a_i \leq x < a_{i+1}, 1 \leq i\leq M-1\\
    H_p(D^M) & a_M \leq x < \overline{b_1} \\
    H_p(D^M, E^j) & \overline{b_j} \leq x < \overline{B_{j+1}}, 1 \leq j \leq N-1 \\
    H_p(D^M, E^N)=0 & x \geq \overline{b_N}
    \end{cases}
\end{equation*}
where $H$ means $H^{sup}$ and $V_{x,y}$'s are induced by inclusion.
\end{exmp}

For our purpose, it suffices to deal with a smaller class of persistence modules. A extended persistence module $V$ is called \textit{decomposable} if it is isomorphic to a direct sum of finitely many interval modules over $E$, none of which has $\infty$ as an endpoint. Such decomposition is always unique (see \cite{chazal2016structure} Theorem 2.7, note that $E$ is isomorphic to $\mathbb{R}$ as a poset). Decomposability can be alternatively formulated in the following way:
\begin{prop} \label{decomposability}
An extended persistence module $V$ is decomposable if and only if:

(i) $V_t$ is finite dimensional for all $t \in E$

(ii) there exist $a, b\in \mathbb{R}$, such that $V_{s,t}$ is an isomorphism for all $a<s<t<\overline{b}$ (i.e., $V$ is \textit{locally constant} near $\infty$)
\end{prop}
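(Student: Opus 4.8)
The plan is to establish the two directions of the equivalence separately, exploiting throughout the fact that $E$ is order-isomorphic to $\mathbb{R}$, so that the structure theory of persistence modules over $\mathbb{R}$ (in particular the uniqueness cited from \cite{chazal2016structure}) applies verbatim. For the forward direction, suppose $V \cong \bigoplus_{\lambda} k_{I_\lambda}$ with finitely many intervals $I_\lambda$, none having $\infty$ as an endpoint. Condition (i) is immediate: at each $t \in E$ only finitely many summands are nonzero, and each contributes a one-dimensional space, so $\dim V_t < \infty$. For condition (ii), the key observation is that each $I_\lambda$, not having $\infty$ as an endpoint, either contains a whole neighborhood of $\infty$ in $E$ (a set of the form $(a_\lambda, \overline{b_\lambda})$) or is disjoint from such a neighborhood. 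Since there are finitely many intervals, I would take $a = \max$ of the relevant left endpoints below $\infty$ and $\overline{b} = \min$ (in the $E$-order) of the relevant right endpoints above $\infty$, over those $\lambda$ for which $I_\lambda$ meets a neighborhood of $\infty$; for such $\lambda$ the restriction of $k_{I_\lambda}$ to $(a, \overline{b})$ is the constant module $k$, and for all other $\lambda$ the restriction is $0$. Hence $V|_{(a,\overline{b})}$ is a finite direct sum of constant modules, so $V_{s,t}$ is an isomorphism for $a < s < t < \overline{b}$.

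For the converse, assume (i) and (ii). The strategy is to remove the local-constancy ambiguity at $\infty$ by a ``collapsing'' argument: pick $a, b$ as in (ii), and observe that the full subposet $E' = (-\infty, a] \cup \{\infty\} \cup [\overline{b}, +\infty)$ (using the $E$-order, so $\overline{b}$ sits above $\infty$) is again order-isomorphic to $\mathbb{R}$, and the inclusion $E' \hookrightarrow E$ is cofinal-and-coinitial on each side of $\infty$ in the sense that every element of $E$ lies between two elements of $E'$ with $V$ sending the connecting maps to isomorphisms. Concretely, I would define a persistence module $V'$ over $E'$ by restriction, note $V' $ is pointwise finite dimensional, and invoke the structure theorem (\cite{botnan2020decomposition} Theorem 1.2, or the $\mathbb{R}$-version in \cite{chazal2016structure}) to decompose $V' \cong \bigoplus_\lambda k_{J_\lambda}$ into interval modules over $E'$. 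Then I would pull this decomposition back along $E \to E'$ — more precisely, extend each interval $J_\lambda \subset E'$ to an interval $I_\lambda \subset E$ by ``filling in'' the gap around $\infty$ whenever $J_\lambda$ meets both $\{a\}$-side and $\{\overline{b}\}$-side, using that $V_{s,t}$ is an isomorphism on $(a,\overline{b})$ — and check that $\bigoplus_\lambda k_{I_\lambda} \cong V$ by comparing the two modules levelwise and using that the connecting isomorphisms force the comparison map (built from the given isomorphism over $E'$ together with the forced identifications near $\infty$) to be an isomorphism everywhere. Finiteness of the index set: the number of $J_\lambda$ is bounded because $\dim V'_a + \dim V'_{\overline{b}} + \dim V'_{\infty}$ is finite and each interval over $E'$ is nonzero at at least one of these three ``corner'' points — actually at $a$, $\infty$, or $\overline{b}$, since any interval in $E'$ containing a point of the lower ray also contains $a$ or the interval is bounded inside $(-\infty,a]$ and still contains some point... more cleanly, every nonzero interval module over the three-part poset $E'$ is nonzero at $a$, at $\infty$, or at $\overline{b}$, so there are at most $\dim V_a + \dim V_\infty + \dim V_{\overline b}$ of them. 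Finally, no $I_\lambda$ has $\infty$ as an endpoint: an interval with endpoint exactly $\infty$ would either be constant-then-zero or zero-then-constant across $\infty$, contradicting that $V_{s,t}$ is an isomorphism for $s$ just below and $t$ just above $\infty$ once $s > a$, $t < \overline b$ — so such a summand cannot occur.

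The main obstacle I anticipate is the bookkeeping in the converse: carefully transferring the interval decomposition from $E'$ to $E$ and verifying that the resulting map is an isomorphism at \emph{every} point of $E$, including the points strictly between $a$ and $\overline b$ other than $\infty$, which are not in $E'$ at all. The cleanest way around this is probably not to work with $E'$ as a sub-poset but instead to first replace $V$ by an isomorphic module that is literally constant on $(a, \overline b)$ (collapsing that whole sub-interval to a single copy of $V_\infty$ via the given isomorphisms), apply the $\mathbb{R}$-structure theorem to the collapsed module, and then un-collapse; checking that collapsing and un-collapsing are inverse equivalences on the relevant subcategory is routine but must be done. Everything else — pointwise finiteness, the endpoint condition, and the finiteness of the decomposition — follows formally once this transfer is in place.
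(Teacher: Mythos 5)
Your forward direction is fine and matches the paper's (which dismisses it as trivial). The issue is the converse, where you take a substantially more complicated route than the paper does. The paper simply applies the pointwise-finite-dimensional decomposition theorem (Crawley--Boevey, cited as \cite{chazal2016structure} Theorem 2.8) directly to $V$ regarded as a persistence module over $E$---which is legitimate because $E$ is order-isomorphic to $\mathbb{R}$---and then observes that condition (ii) rules out any interval with $\infty$ as an endpoint. There is no ``local-constancy ambiguity at $\infty$'' that needs to be removed: the decomposition theorem does not care that $\infty$ plays a distinguished role, and $\infty$ only becomes special later, when classifying intervals as ordinary/relative/extended. Your detour through the sub-poset $E' = (-\infty,a]\cup\{\infty\}\cup[\overline b,+\infty)$, the ``pull back the decomposition'' step, and the proposed alternative ``collapse/un-collapse'' construction all amount to re-deriving in a roundabout way what the cited theorem already gives for free when applied to $V$ over $E$. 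You yourself flag this transfer step as the main obstacle, and indeed it is where your write-up is least convincing; the paper avoids it entirely.

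There is also a concrete error in your finiteness argument. You claim that every nonzero interval module over $E'$ must be nonzero at one of the three ``corner'' points $a$, $\infty$, $\overline b$, and conclude that the number of intervals is bounded by $\dim V_a + \dim V_\infty + \dim V_{\overline b}$. This is false: $E'$ contains the entire rays $(-\infty,a]$ and $[\overline b,+\infty)$, and a bounded interval lying strictly inside $(-\infty,a)$ (never reaching $a$) gives a nonzero interval module that vanishes at all three corners. So your bound does not hold, and with it the finiteness of the index set is not established by your argument. (It is worth noting that the paper's own proof also passes over finiteness in silence---Crawley--Boevey only guarantees an interval decomposition, not a finite one---so this is a genuine subtlety in the statement; but your attempt to patch it with the corner-point count does not work.)
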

\begin{proof}
For the "if" part, condition (i) implies the existence of interval decomposition (c.f. \cite{chazal2016structure} Theorem 2.8). Local constancy near $\infty$ guarantees that none of the intervals has $\infty$ as endpoint. The "only if" part is trivial.
\end{proof}
 As an example, the module in Example \ref{Example} is decomposable if all homology groups involved are finite dimensional.
\par
The definition of persistence diagrams involves multisets. For our purposes, a \textit{multiset} is a
pair $A = (S, m)$ where $S$ is a set and
$$m:S\to \{1, 2,...\}\cup \infty$$
is the multiplicity function, which counts the occurrence of an element in A. The \textit{cardinality} $card(A)$ of $A=(S,m)$ is defined to be $\sum_{s\in S}m(s)$ if this sum is well-defined and finite, and $\infty$ otherwise.
We now move to define the \textit{extended persistence diagrams} of a decomposable module $V$. For simplicity, we denote an interval in $E$ of the form $(a,b),(a,b],[a,b),[a,b]$ as $(a^+,b^-),(a^+,b^+),(a^-,b^-),(a^-,b^+)$ respectively. There are then 3 types of intervals in the decomposition:

(i) ordinary: $(a^\pm,a'^\pm)$

(ii) relative: $(\Bar{b}^\pm,\Bar{b'}^\pm)$

(iii) extended: $(a^\pm,\Bar{b}^\pm)$.

The \textit{ordinary persistence diagram} of $V$ is obtained by placing a point $(a,a')$  on the plane $\mathbb{R} \times \mathbb{R}$ for each ordinary interval $(a^\pm,a'^\pm)$, counting multiplicity. The \textit{relative} (resp. \textit{extended}) \textit{persistence diagram} is defined similarly, except it lies on the plane $\mathbb{R} \times \mathbb{R}^o$ (resp. $\mathbb{R}^o \times \mathbb{R}^o$). The 3 types of persistence diagrams are denoted by $\mbox{Ord}(V)$, $\mbox{Rel}(V)$ and $\mbox{Ext}(V)$ respectively. Note that they are multisets on the respective planes.

\subsection{Bottleneck Distance and Interleaving}
\label{Bottleneck Distance and Interleaving}

There are two notions of distance between persistence modules: bottleneck distance and interleaving distance. Roughly speaking, the bottleneck distance measures how close the persistence modules are by comparing their persistence diagrams, while the interleaving distance measures how far away they are from being isomorphic and typically can be related to the input data. In the unextended case, the two distances are equated by the Isometry Theorem (see \cite{chazal2016structure} Theorem 5.14) if certain finiteness assumptions are satisfied.

We now formulate these notions in the extended setting, starting with bottleneck distance. Throughout this discussion, we will be using the $l^\infty$ distance on the plane:
\begin{equation*}
    d((x,y),(z,w)) = \mbox{max} (|x-z|, |y-w|)
\end{equation*}

Given two multisets $A,B$, a \textit{partial matching} is a bijection $\phi: A' \longleftrightarrow B'$ between subsets $A' \subset A, B' \subset B$. If $A'=A, B'=B$, we call $\phi$ a \textit{perfect matching}. 

Let $V,W$ be decomposable persistence modules and $\delta >0$. A $\delta$-matching between $V$ and $W$ is a triple $\Phi = \{\phi_O, \phi_R, \phi_E\}$ where $\phi_O$ (resp. $\phi_R$) is a partial matching between $\mbox{Ord}(V)$ and $\mbox{Ord}(W)$ (resp. $\mbox{Rel}(V)$ and $\mbox{Rel}(W)$) while $\phi_E$ is a perfect matching between $\mbox{Ext}(V)$ and $\mbox{Ext}(W)$ such that all matched pairs are $\delta$-close and unmatched points are $\delta$-close to the diagonal in the plane. The \textit{bottleneck distance} between $V,W$ is defined by
\begin{equation*}
    d_B(V,W) = \mbox{inf} \{\delta \mbox{ }|\mbox{ } V, W \mbox{ are } \delta \mbox{-matched}\}
\end{equation*}
In particular, the requirement of perfect matching for the extended diagram means that $d_B(V,W)= +\infty$ unless $\mbox{dim}V_\infty =\mbox{dim}W_\infty$.

We now turn to interleaving. 
\begin{definition} \label{interle}

 Let $V,W$ be extended persistence modules and $\varepsilon$ be a positive real number. A $\varepsilon$-\textit{interleaving} between $V$ and $W$ is a quadruple of families of maps $(\{\varphi_a\}, \{\varphi_{\Bar{b}}\},\{\psi_a\}, \{\psi_{\Bar{b}}\})$ where
\begin{equation*}
     \begin{matrix}
    \varphi_a: V_a \longrightarrow W_{a+\varepsilon} & \varphi_{\Bar{b}}: V_{\overline{b}} \longrightarrow W_{\overline{b -\varepsilon}}\\
    \psi_a : W_a \longrightarrow V_{a+\varepsilon} & \psi_{\Bar{b}}: W_{\overline{b}} \longrightarrow V_{\overline{b - \varepsilon}}
    \end{matrix}
\end{equation*}
for $a \in \mathbb{R}, \overline{b} \in \mathbb{R}^o$ satisfying the following conditions:

(i) For $a < a' \in \mathbb{R}, \overline{b} < \overline{b'} \in \mathbb{R}^o$, we have the following naturality conditions:
\begin{equation*}
    \begin{matrix}
        W_{a+\varepsilon, a'+\varepsilon} \circ \varphi_a = \varphi_{a'} \circ V_{a,a'} \\
        W_{a+\varepsilon, \overline{b-\varepsilon}} \circ \varphi_a = \varphi_{\overline{b}} \circ V_{a,\overline{b}} \\
        W_{\overline{b-\varepsilon}, \overline{b'-\varepsilon}} \circ \varphi_{\overline{b}} = \varphi_{\overline{b'}} \circ V_{\overline{b},\overline{b'}} 
    \end{matrix}
\end{equation*}
In other words, the following diagrams commute:

\begin{tikzcd}
   & W_{a+\varepsilon} \arrow[rr, "W_{a+\varepsilon, a'+\varepsilon}"] & & W_{a'+\varepsilon} \\
   V_a \arrow[ur, "\varphi_a"] \arrow[rr, "V_{a,a'}"]& & V_{a'} \arrow[ur, "\varphi_{a'}"] \\
   & W_{a+\varepsilon} \arrow[rr, "W_{a+\varepsilon, \overline{b-\varepsilon}}"] & & W_{\overline{b-\varepsilon}} \\
   V_a \arrow[ur, "\varphi_a"] \arrow[rr, "V_{a,\overline{b}}"]& & V_{\overline{b}} \arrow[ur, "\varphi_{\overline{b}}"] \\
   & W_{\overline{b-\varepsilon}} \arrow[rr, "W_{\overline{b-\varepsilon}, \overline{b'-\varepsilon}}"] & & W_{\overline{b'-\varepsilon}} \\
   V_{\overline{b}} \arrow[ur, "\varphi_a"] \arrow[rr, "V_{\overline{b},\overline{b'}}"]& & V_{\overline{b'}} \arrow[ur, "\varphi_{\overline{b'}}"]
\end{tikzcd}

(i') The 3 equations in (i) remains true with $V,W$ exchanged and $\varphi$ replaced by $\psi$.

(ii) For $a \in \mathbb{R}, \overline{b} \in \mathbb{R}^o$, the following equations hold:
\begin{equation*}
    \begin{matrix}
        \psi_{a+\varepsilon} \circ \varphi_a = V_{a,a+\varepsilon} \\
        \psi_{\overline{b-\varepsilon}} \circ \varphi_{\overline{b}} =V_{\overline{b},\overline{b-2\varepsilon}}
    \end{matrix}
\end{equation*}
These amounts to 
commutativity of diagrams:

\begin{tikzcd}
    & W_{a+\varepsilon} \arrow[dr, "\psi_{a+\varepsilon}"] & \\
    V_a \arrow[rr, "V_{a,a+2\varepsilon}"] \arrow[ur, "\varphi_a"]  & & V_{a+2\varepsilon} \\
    & W_{\overline{b- \varepsilon}} \arrow[dr, "\psi_{\overline{b-\varepsilon}}"] & \\
    V_{\overline{b}} \arrow[rr, "V_{\overline{b},\overline{b-2\varepsilon}}"] \arrow[ur, "\varphi_{\overline{b}}"]  & & V_{\overline{b-2\varepsilon}}
\end{tikzcd}

(ii') The 2 equations in (ii) remains true with the role of $(V,\varphi)$ and $(W,\psi)$ reversed.\\
\end{definition}

The \textit{interleaving distance} between $V,W$ is defined by:
\begin{equation*}
    d_I(V,W) = \mbox{inf} \{\varepsilon \mbox{ }|\mbox{ } V, W \mbox{ are } \varepsilon \mbox{-interleaved}\}
\end{equation*}

We shall prove that the assumptions of decomposability are sufficient for stability, i.e., the bottleneck distance of two extended persistence modules is bounded above by their interleaving distance. 

The proof of our stability theorem makes use of rectangle measures. For our purpose, we define an \textit{admissible rectangle} as a planar rectangle of one of the following forms:
$$[a, b]\times [c, d] \subseteq \mathbb{R}\times \mathbb{R}, a <b<  c<d\in \mathbb{R}$$
 $$[\overline{a}, \overline{b}]\times [\overline{c}, \overline{d}] \subseteq \mathbb{R}^o  \times \mathbb{R}^o, a >b>  c>d\in \mathbb{R}$$
 $$[a, b]\times [\overline{c}, \overline{d}] \subseteq \mathbb{R}\times \mathbb{R}^o,  a <b,  c>d\in \mathbb{R}$$
 
 We define three rectangle measures, one for each plane, that assign a nonnegative integer or $+\infty$ to a admissible rectangle $T$.
 $$u^O_V(T):=card (\mbox{Ord}(V)\mid_{T})$$
 $$u^R_V(T):=card (\mbox{Rel}(V)\mid_{T})$$
 $$u^E_V(T):=card (\mbox{Ext}(V)\mid_{T})$$
 
 Given $\delta>0$, the $\delta$-thickening of a rectangle is defined by
 $$([a, b]\times [c, d])^{\delta}=[a-\delta,b+\delta]\times [c-\delta,d+\delta]$$
 $$([\overline{a}, \overline{b}]\times [\overline{c}, \overline{d}])^\delta=[\overline{a+\delta},\overline{b-\delta}]\times [\overline{c+\delta},\overline{d-\delta}]$$$$([a,b]\times [\overline{c}, \overline{d}])^\delta=[a-\delta,b+\delta]\times [\overline{c+\delta},\overline{d-\delta}]$$

The following lemmas are the analogy of Theorem 5.26 and Theorem 5.29 of \cite{chazal2016structure} in the extended setting.
 \begin{lemma}\label{measure}
 Let $U, V $ be a $\delta$-interleaved pair of extended persistence modules.Let $T$ be an admissible rectangle in $\mathbb{R}\times \mathbb{R},\mathbb{R}  \times \mathbb{R}^o$ or $\mathbb{R}^o\times \mathbb{R}^o$ whose $\delta$-thickening $T^{\delta}$ is also admissible. Then $u^*_U(T)\leq u^*_V(T^{\delta})$, $u^*_V(T)\leq u^*_U(T^{\delta})$ .
 \end{lemma}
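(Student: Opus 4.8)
The plan is to mimic the proof of the interpolation/triangle inequality for rectangle measures in the unextended setting (\cite{chazal2016structure}, Theorem 5.26 and Theorem 5.29), adapting the bookkeeping to the three types of admissible rectangles. The essential point is that each rectangle measure $u^*_V(T)$ admits a description purely in terms of the linear algebra of $V$ restricted to the four corners of $T$, with no reference to the barcode decomposition: this is the content of the ``persistence measure'' formula. First I would recall that for a decomposable module $V$ and an admissible rectangle $T = [x_1,x_2]\times[y_1,y_2]$ (with $x_1 < x_2 \le y_1 < y_2$ in whichever of the three orderings is relevant), the number of intervals $I$ in the barcode of $V$ with $I \cap [x_1,y_2] \ne \varnothing$, $x_2 \in I$, and $y_1 \notin I$ — equivalently those ``born by $x_1$, still alive at $x_2$, dead by $y_1$'' in the appropriate sense — can be read off as a rank-nullity expression
\begin{equation*}
    u^*_V(T) = \dim \big(\operatorname{im}(V_{x_1,x_2} \to V_{x_2})\big) - \dim\big(\operatorname{im}(V_{x_1,y_1})\big) - \dim\big(\operatorname{im}(V_{x_2,y_2})\big) + \dim\big(\operatorname{im}(V_{x_1,y_2})\big),
\end{equation*}
or more precisely the alternating sum of ranks of the four maps $V_{x_1,x_2}, V_{x_1,y_1}, V_{x_2,y_1}, V_{x_2,y_2}$ along the two nested pairs of corners. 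This identity is proved by decomposing $V$ into interval modules and checking that each interval module contributes $1$ to both sides exactly when its interval is positioned appropriately relative to the corners, and $0$ otherwise; the three cases of admissible rectangle are handled uniformly once one works inside the totally ordered poset $E$ (which, as the paper notes after Proposition \ref{decomposability}, is order-isomorphic to $\mathbb{R}$).

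Granting the rank formula, the inequality $u^*_U(T) \le u^*_V(T^\delta)$ follows from a diagram chase. Write $T = [x_1,x_2]\times[y_1,y_2]$ and $T^\delta = [x_1-\delta, x_2+\delta]\times[y_1+\delta, y_2-\delta]$ in the ordering induced by $E$ (so ``$-\delta$'' on a barred coordinate means moving toward $\infty$, matching the definitions of $\delta$-thickening given in the excerpt). The $\delta$-interleaving supplies, for every pair $s < t$ of the relevant corners, a commuting triangle/square relating $U_{s,t}$ and $V_{s+\delta, t-\delta}$ (or $V_{s-\delta,t+\delta}$) through the interleaving maps $\varphi, \psi$; the naturality conditions (i), (i') and the triangle conditions (ii), (ii') of Definition \ref{interle} are exactly what is needed to factor the four corner maps of $T$ for $U$ through the corresponding corner maps of $T^\delta$ for $V$, up to pre- and post-composition with interleaving maps. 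Feeding these factorizations into the rank formula and using submodularity of rank (i.e. $\operatorname{rank}(fg) \le \operatorname{rank}(g)$ and the ``box inequality'' for ranks of compositions through a common middle term) yields $u^*_U(T) \le u^*_V(T^\delta)$; the reverse inequality is symmetric in $U$ and $V$. One must take care that $T^\delta$ being admissible — which is hypothesized — guarantees all the intermediate corners $x_1-\delta, x_2+\delta, y_1+\delta, y_2-\delta$ sit in the correct order in $E$, so that every map invoked actually exists; in the mixed case $[a,b]\times[\overline c,\overline d]$ one additionally needs $b+\delta < \infty < \overline{c+\delta}$, which holds automatically since $\infty$ separates $\mathbb{R}$ from $\mathbb{R}^o$ in $E$.

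The main obstacle I anticipate is not any single estimate but the case analysis bookkeeping: the three shapes of admissible rectangle (entirely in $\mathbb{R}\times\mathbb{R}$, entirely in $\mathbb{R}^o\times\mathbb{R}^o$, or straddling as $\mathbb{R}\times\mathbb{R}^o$) interact differently with the four families of interleaving maps $\varphi_a, \varphi_{\overline b}, \psi_a, \psi_{\overline b}$, and one has to verify in each case that the relevant composite of interleaving maps is governed by the correct instance of the naturality/triangle identities — in particular the ``crossing'' identity $W_{a+\varepsilon,\overline{b-\varepsilon}} \circ \varphi_a = \varphi_{\overline b} \circ V_{a,\overline b}$, which is the only one mixing an ordinary index with a barred index, must be used for the straddling rectangle. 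A secondary technical point is making the rank formula robust when some corners of $T$ land on the degenerate boundary (e.g.\ $x_2 = y_1$); here one should either restrict to strict inequalities in the definition of admissible rectangle (as the excerpt does) or pass to a limit, and I would simply observe that the excerpt's definition already imposes $x_1 < x_2 < y_1 < y_2$ (resp.\ the reversed/mixed versions), so no degeneracy arises. Once the formula and the diagram chase are in place, the proof is a finite, mechanical verification, so I would present it by first stating and proving the rank formula as a sub-lemma, then doing the interleaving factorization once with indices written generically and noting that the three cases differ only in which of the Definition \ref{interle} identities is cited.
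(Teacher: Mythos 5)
Your plan takes the same route as the paper, whose entire proof of this lemma is the one-line citation that Theorem 5.26 of \cite{chazal2016structure} carries over verbatim; that cited argument is precisely the rank-formula-plus-interleaving diagram chase you sketch. One slip worth fixing before you execute: the first term in your displayed rank formula should be $\operatorname{rank}(V_{x_2,y_1})$, not $\dim\operatorname{im}(V_{x_1,x_2}\to V_{x_2})$, so that all four ranks are of maps from an $x$-corner to a $y$-corner (which is exactly what permits the factorization through the interleaving maps), and likewise your ``more precisely'' list should read $V_{x_2,y_1},V_{x_1,y_1},V_{x_2,y_2},V_{x_1,y_2}$; your interval-by-interval check will produce this corrected formula automatically.
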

 \begin{proof}
The proof of Theorem 5.26 of \cite{chazal2016structure} carries verbatim.
 \end{proof}

\begin{lemma}\label{Stability}
 For $\varepsilon>0$, if there exists a family $\{V^t| t \in [0, \varepsilon] \}$ of decomposable extended persistence modules such that $V^s, V^t$ are $|s-t|$-interleaved for all $s,t \in [0, \varepsilon]$, then $d_B(V^0,V^\varepsilon) \leq \varepsilon$.
\end{lemma}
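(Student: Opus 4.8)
The statement is the extended-persistence analogue of the standard result that a one-parameter family of pairwise $|s-t|$-interleaved modules has bottleneck distance at most $\varepsilon$. The natural route is to mimic the proof of Theorem 5.29 (the ``converse stability'' / $2$-interleaving implies bottleneck) in \cite{chazal2016structure}, which is phrased entirely in terms of rectangle measures, and to check that every ingredient has an extended counterpart supplied by the machinery already built in this section. Concretely, the three rectangle measures $u^O_V, u^R_V, u^E_V$ together with the $\delta$-thickening operation on admissible rectangles play the role of the single measure in the unextended theory, and Lemma \ref{measure} is exactly the stability-of-measures input needed to run the argument. So the plan is: first reduce the $\delta$-matching between $V^0$ and $V^\varepsilon$ to three separate matching problems --- a partial matching of ordinary diagrams, a partial matching of relative diagrams, and a \emph{perfect} matching of extended diagrams --- and then solve each by the measure-theoretic matching lemma.

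\textbf{Key steps.} First I would record that by decomposability each $V^t$ has well-defined multiset diagrams $\mathrm{Ord}(V^t)$, $\mathrm{Rel}(V^t)$, $\mathrm{Ext}(V^t)$ living on $\mathbb{R}\times\mathbb{R}$, $\mathbb{R}\times\mathbb{R}^o$, $\mathbb{R}^o\times\mathbb{R}^o$ respectively, and that local constancy near $\infty$ forces these to be finite multisets with $\mathrm{Ext}(V^t)$ having cardinality $\dim V^t_\infty$, which is constant in $t$ along the interleaved family (an $|s-t|$-interleaving induces an isomorphism $V^s_\infty\cong V^t_\infty$ via $\psi_{\overline{\,\cdot\,}}\circ\varphi_{\overline{\,\cdot\,}}$ composed appropriately, or more simply because $V^s_\infty\to V^t_\infty\to V^s_\infty$ is the identity by condition (ii)). Second, for the ordinary and relative diagrams I would invoke the ``box lemma'' form of the matching theorem: given the family of rectangle measures on one of the half-open planes satisfying the interpolation property from Lemma \ref{measure} (for every $\delta$ and every admissible $T$ with $T^\delta$ admissible, $u(T)$ at parameter $s$ is bounded by $u(T^{|s-t|})$ at parameter $t$), a standard argument --- essentially Hall's theorem in a measure/limiting form, exactly as in \cite{chazal2016structure} Theorem 5.23 and 5.29 --- produces an $\varepsilon$-matching of the corresponding diagrams in which unmatched points are within $\varepsilon$ of the diagonal. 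Since the family is defined on the whole interval $[0,\varepsilon]$ and is pairwise interleaved, one gets the matching between the endpoints directly (this is where the continuity of the family, as opposed to just a single $\varepsilon$-interleaving, is used --- it lets one avoid the factor-$2$ loss and obtain $\varepsilon$ rather than $2\varepsilon$). Third, for the extended diagram the same measure argument yields an $\varepsilon$-matching, but I must additionally upgrade it to a \emph{perfect} matching: because $\mathrm{Ext}(V^0)$ and $\mathrm{Ext}(V^\varepsilon)$ have equal finite cardinality and no extended point can be pushed to the diagonal (the two coordinates lie in different copies $\mathbb{R}^o$ and are unconstrained, so ``near the diagonal'' does not force a point to be deletable --- one must check the admissible-rectangle bookkeeping here), the partial matching extends to a bijection. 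Finally, assembling $\Phi=\{\phi_O,\phi_R,\phi_E\}$ gives an $\varepsilon$-matching between $V^0$ and $V^\varepsilon$, hence $d_B(V^0,V^\varepsilon)\le\varepsilon$.

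\textbf{Main obstacle.} The routine part is transcribing the unextended measure-matching proof; Lemma \ref{measure} is precisely engineered to make that transcription work. The genuine subtlety is the bookkeeping on the mixed plane $\mathbb{R}\times\mathbb{R}^o$ and the perfectness requirement on $\mathrm{Ext}$: one has to verify that the $\delta$-thickening of an admissible rectangle stays admissible in the regimes that actually occur when chaining the family across $[0,\varepsilon]$ (the constraint $a<b<c<d$ on the $\mathbb{R}\times\mathbb{R}$ rectangles, and the analogous strict inequalities, can degenerate under thickening), and that the extended matching can always be completed to a bijection --- i.e. that the ``defect'' multiset of unmatched extended points is empty, not merely close to a diagonal. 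I expect this to reduce to the constancy of $\dim V^t_\infty$ together with a counting argument: any unmatched extended point of $V^0$ would, by Lemma \ref{measure} applied to a small admissible box around it, force an unmatched point of $V^\varepsilon$ of the same multiplicity, and an Euler-characteristic / cardinality count on $\mathrm{Ext}$ then rules out a nonempty symmetric defect. Making this count rigorous in the limit $\delta\downarrow 0$, handling multiplicities and possible accumulation, is the step I would spend the most care on.
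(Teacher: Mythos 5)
Your overall plan matches the paper's proof closely: reduce to three matching problems (ordinary, relative, extended), use Lemma \ref{measure} as the stability-of-measures input, and apply Theorem 5.29 of \cite{chazal2016structure} (the continuity-of-measures matching theorem) to the one-parameter family $(u^{\ast}_{V^s})_{s\in[0,\varepsilon]}$ to get each matching. You correctly identify that the whole family is needed to avoid the factor-two loss, and correctly observe that $\dim V^t_\infty$ is constant along the family.

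The one place you diverge is the perfectness of $\phi_E$, and it is where your proposed route is both messier and not quite sound as stated. You propose to first obtain a partial $\varepsilon$-matching on $\mathrm{Ext}$ and then upgrade it to a bijection via a cardinality/defect count. Equal cardinality of two finite multisets does not by itself let you extend a given partial matching to a perfect one with the same bound --- you would need to re-run a Hall-type argument under the constraint that no point may be discarded, which is essentially redoing the matching theorem rather than post-processing its output. The paper instead folds the perfectness requirement directly into the application of Theorem 5.29 by choosing the domain $\mathcal{D}$: for $\phi_O$ and $\phi_R$, $\mathcal{D}$ is taken to be the open half-plane above the diagonal (so points near the diagonal are allowed to go unmatched), whereas for $\phi_E$, $\mathcal{D}$ is taken to be the \emph{entire} plane, so there is no boundary available to absorb unmatched points and the output of the theorem is automatically a perfect matching. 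That single choice replaces your ``Euler-characteristic / limiting $\delta\downarrow 0$'' argument entirely, and is the observation you were missing; once it is in place, the rest of your plan is the proof.
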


\begin{proof}
 For any $s,t \in [0,\varepsilon]$ and any rectangle $T$ such that $T^{|s-t|}$ is admissible, we have 
$u^*_{V^s}(T)\leq u^*_{V^t}(T^{|s-t|}), u^*_{V^t}(T)\leq u^*_{V^s}(T^{|s-t|})$ by Lemma \ref{measure}. It suffices to find an $\varepsilon$-matching $\Phi = \{\phi_O, \phi_R, \phi_E\}$ between $V^0$ and $V^\varepsilon$. The partial matching $\phi_O$ can be obtained by applying Theorem 5.29 of \cite{chazal2016structure} to $(u^O_{V^s} | s \in [0,\varepsilon])$ with $\mathcal{D}$ being the open half plane above the diagonal (not including infinity). The matching $\phi_R$ is obtained analogously, using the natural symmetry between $\mathbb{R} \times \mathbb{R}$ and $\mathbb{R} \times \mathbb{R}^o$.
To get $\phi_E$, we utilize the symmetry between $\mathbb{R} \times \mathbb{R}$ and $\mathbb{R}^o \times \mathbb{R}^o$ and apply the above-mentioned theorem to $(u^E_{V^s} | s \in [0,\varepsilon])$ with $\mathcal{D}$ being the entire plane (not including infinity). The resulted $\phi_E$ has to be perfect by our choice of $\mathcal{D}$.
\end{proof}
By Lemma \ref{Stability}, given two $\varepsilon$-interleaved extended persistence modules, it suffices to join them by a family of mutually interleaved modules (as in Lemma \ref{Stability}). This family can be constructed using a Kan extension argument.

Given $\varepsilon \geq 0$, we define a translation $\Omega_\varepsilon$ on the subset $\mathbb{R} \cup \mathbb{R}^o$ of $E$ as follows 
\begin{equation*}
    \Omega_\varepsilon(x)=
    \begin{cases}
    x+\varepsilon & x \in \mathbb{R}  \\
    
    x-\varepsilon & x\in \mathbb{R}^o
    
    \end{cases}
\end{equation*}
\par
We can make $E\times \{0, \varepsilon\}$ a poset by setting    $(x, a) \leq (y, b)$ if one of the following holds:
\par(1)  $\Omega_{|a-b|}(x)  \leq  y$ ,  $x\neq \infty, y \neq \infty $
\par(2)   $x \in \mathbb{R}, y = \infty$
\par(3) $x =\infty, y \in \mathbb{R}^o$.
\begin{theorem}\label{exinterle}

Two  extended persistence module $V$, $W$  are  $\varepsilon$-interleaved if and only if the following functor extension problem has a solution:
\par
\begin{tikzcd}
                                   & Vect                                               &                                                \\
E \arrow[r, "i_0"] \arrow[ru, "V"] & E\times \{0, \varepsilon\} \arrow[u, "L", dotted] & E \arrow[l, "i_\varepsilon"'] \arrow[lu, "W"']
\end{tikzcd}
\end{theorem}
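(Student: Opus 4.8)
The plan is to unwind both the hypothesis and the conclusion into statements about commuting diagrams in $\mathrm{Vect}$, and then observe that the poset $E \times \{0,\varepsilon\}$ has been engineered precisely so that a functor out of it is the same data as an $\varepsilon$-interleaving. First I would set up notation: write $i_0, i_\varepsilon : E \to E \times \{0,\varepsilon\}$ for the two inclusions $x \mapsto (x,0)$ and $x \mapsto (x,\varepsilon)$; one checks from the three clauses defining the order on $E \times \{0,\varepsilon\}$ that these are poset embeddings, so $V$ and $W$ genuinely sit inside any candidate extension $L$.

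For the ``if'' direction, suppose $L : E \times \{0,\varepsilon\} \to \mathrm{Vect}$ extends $V$ along $i_0$ and $W$ along $i_\varepsilon$. Define $\varphi_a := L_{(a,0)\leq(a+\varepsilon,\varepsilon)} : V_a = L_{(a,0)} \to L_{(a+\varepsilon,\varepsilon)} = W_{a+\varepsilon}$, using that $(a,0) \leq (a+\varepsilon,\varepsilon)$ by clause (1) with $\Omega_\varepsilon(a) = a+\varepsilon$; similarly $\varphi_{\bar b} := L_{(\bar b,0)\leq(\overline{b-\varepsilon},\varepsilon)}$, since $\Omega_\varepsilon(\bar b) = \overline{b-\varepsilon}$, and $\psi_a := L_{(a,\varepsilon)\leq(a+\varepsilon,0)}$, $\psi_{\bar b} := L_{(\bar b,\varepsilon)\leq(\overline{b-\varepsilon},0)}$. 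The naturality squares (i), (i') are then instances of functoriality of $L$ composed with the fact that $L \circ i_0 = V$ and $L \circ i_\varepsilon = W$; for example the first square in (i) is $L$ applied to the commuting square $(a,0)\leq(a',0)\leq(a'+\varepsilon,\varepsilon)$ and $(a,0)\leq(a+\varepsilon,\varepsilon)\leq(a'+\varepsilon,\varepsilon)$ in the poset. The triangle identities (ii), (ii') follow because $\psi_{a+\varepsilon}\circ\varphi_a = L_{(a+\varepsilon,\varepsilon)\leq(a+2\varepsilon,0)}\circ L_{(a,0)\leq(a+\varepsilon,\varepsilon)} = L_{(a,0)\leq(a+2\varepsilon,0)} = V_{a,a+2\varepsilon}$, and likewise on the $\mathbb{R}^o$ side using $\Omega_\varepsilon\Omega_\varepsilon = \Omega_{2\varepsilon}$ in the appropriate direction; note the composite passes through level $\varepsilon$ but lands back at level $0$, which is exactly the ``round trip'' appearing in Definition \ref{interle}. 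The compatibility at $\infty$ is forced by clauses (2), (3): every $(a,0)$ and $(a,\varepsilon)$ with $a\in\mathbb{R}$ maps to $(\infty,0)=(\infty,\varepsilon)$ (these are identified since neither clause (1) applies and clauses (2),(3) make them mutually $\leq$), so $V_\infty = W_\infty = L_{(\infty,\cdot)}$ canonically and all structure maps into and out of $\infty$ agree.

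For the ``only if'' direction, given an $\varepsilon$-interleaving $(\{\varphi_a\},\{\varphi_{\bar b}\},\{\psi_a\},\{\psi_{\bar b}\})$ I would \emph{define} $L$ on objects by $L_{(x,0)} = V_x$, $L_{(x,\varepsilon)} = W_x$ (consistently setting $L_{(\infty,0)} = L_{(\infty,\varepsilon)} = V_\infty = W_\infty$), and on a relation $(x,a)\leq(y,b)$ by the appropriate composite: if $a = b$ use the internal map of $V$ or $W$; if $a = 0, b = \varepsilon$ use $V_{x,\Omega_{-\varepsilon}'(\cdots)}$ followed by $\varphi$, or more simply factor $(x,0)\leq(\Omega_\varepsilon(x),\varepsilon)\leq(y,\varepsilon)$ and use $\varphi$ then a map of $W$; symmetrically for $a=\varepsilon,b=0$ using $\psi$. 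The content to verify is that $L$ is well-defined (the composite does not depend on how we factor the relation) and functorial; this reduces to the interleaving axioms (i),(i'),(ii),(ii'), where the triangle identities handle precisely the cases where a relation can be factored through either level. I expect the main obstacle to be this bookkeeping: checking that \emph{every} relation in $E\times\{0,\varepsilon\}$ admits a canonical factorization and that the two natural ways of composing two ``level-switching'' steps agree — i.e. that going $0\to\varepsilon\to 0$ and then applying an internal $V$-map equals applying the internal map first, which is where one must combine a naturality square with a triangle identity — together with the separate verification that the degenerate cases involving $\infty$ (clauses (2),(3)) are consistent, since there $\Omega$ is not defined and one must check the universal/terminal-type behavior of $V_\infty$ by hand. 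Once well-definedness and functoriality are established, $L\circ i_0 = V$ and $L\circ i_\varepsilon = W$ hold by construction, completing the equivalence.
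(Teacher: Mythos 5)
Your overall strategy — read off an $\varepsilon$-interleaving from a functor $L$ on $E\times\{0,\varepsilon\}$ and, conversely, assemble $L$ by factoring relations through level-switching steps — is exactly the paper's approach; both proofs are sketches, and yours actually identifies the genuine bookkeeping issues (well-definedness of $L$ on a relation that admits several factorizations, and the degenerate $\infty$ cases) more explicitly than the paper does.

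There is one concrete error, though. You assert that $(\infty,0)$ and $(\infty,\varepsilon)$ are ``mutually $\leq$'' under clauses (2) and (3), and conclude $V_\infty = W_\infty = L_{(\infty,\cdot)}$ ``canonically.'' Under the order as the paper defines it, neither relation holds: clause (1) requires $x\neq\infty$ and $y\neq\infty$, clause (2) requires $x\in\mathbb{R}$ (not $x=\infty$), and clause (3) requires $y\in\mathbb{R}^o$ (not $y=\infty$). So $(\infty,0)$ and $(\infty,\varepsilon)$ are in fact incomparable. This is not a harmless slip: if they were identified, every extension $L$ would force $V_\infty = W_\infty$ as the same vector space, which is strictly more than an $\varepsilon$-interleaving demands — Definition \ref{interle} indexes the interleaving maps only by $a\in\mathbb{R}$ and $\bar b\in\mathbb{R}^o$ and imposes no condition at $\infty$. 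Reading the poset your way would make the theorem false; reading it the paper's way, the claim is plausible but then the construction of $L_{(\infty,0),(\bar b,\varepsilon)}$ and $L_{(a,0),(\infty,\varepsilon)}$ in the ``only if'' direction really does require the ad hoc argument you flag (e.g.\ defining $L_{(\infty,0),(\bar b,\varepsilon)} := W_{\overline{c-\varepsilon},\bar b}\circ\varphi_{\bar c}\circ V_{\infty,\bar c}$ for a suitable $c$ and checking independence of $c$ using the naturality axioms), rather than following from any canonical identification at $\infty$. Correct that one claim and carry out the $\infty$-case verification you anticipated, and the proof would be complete — and somewhat more careful than the paper's own.

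Two minor side remarks: your use of $V_{a,a+2\varepsilon}$ in the triangle identity is right (the paper's displayed equation $\psi_{a+\varepsilon}\circ\varphi_a = V_{a,a+\varepsilon}$ is a typo, contradicted by its own diagram), and the paper's parenthetical about decomposability inside this proof is not actually used in the theorem and can be safely ignored.
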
 
\begin{proof}
The functor $i_0$ is an identity  embedding from $E$ to $E\times \{0\}$ and functor $i_\varepsilon$ is an identity  embedding from $E$ to $E\times \{\varepsilon\}$. We denote the extension functor $L$ and denote the morphism from $L((x, a))$  to $ L((y, b))$ as $L_{(x, a) ,  (y, b)} $  for any $(x, a) $, $ (y, b) \in E\times \{0, \varepsilon\}$.  If $V$, $W$ is decomposable, then L is finite dimensional, and has local constancy near $\infty$ which means there exist $a \in \mathbb{R}$  $b\in \mathbb{R}$, such that $V_{s,t}$  $W_{s,t}$ is an  isomorphism for all $a<s<t<\overline{b}$.
\par
For the "if" part, the existence of the solution can make us take a quadruple of families of maps ( $\{L_{(a, 0) ,  (a+\varepsilon, \varepsilon) } \}  $, $\{L_{(\Bar{b}, 0) ,  (\Bar{b-\varepsilon}, \varepsilon) } \}  $ , $\{L_{(a, \varepsilon) ,  (a+\varepsilon, 0) } \}  $, $\{L_{(\Bar{b}, \varepsilon) ,  (\Bar{b-\varepsilon}, 0) } \}  $  ) as  a quadruple of families of maps $(\{\varphi_a\}, \{\varphi_{\Bar{b}}\},\{\psi_a\}, \{\psi_{\Bar{b}}\})$ in the Definition \ref{interle}.The  the functoriality of $L$ implies the commutativity of diagrams in the Definition \ref{interle}.
\par For the "only if" part, extended persistence module $V$, $W$ are  $\varepsilon$-interleaved then there exist a quadruple of families of maps ($\{\varphi_a\}$, $\{\varphi_{\Bar{b}}\},\{\psi_a\}$, $\{\psi_{\Bar{b}}\}$) in the Definition \ref{interle} that can be taken as ( $\{L_{(a, 0) ,  (a+\varepsilon, \varepsilon) } \}  $, $\{L_{(\Bar{b}, 0) ,  (\Bar{b-\varepsilon}, \varepsilon) } \}  $ , $\{L_{(a, \varepsilon) ,  (a+\varepsilon, 0) } \}  $, $\{L_{(\Bar{b}, \varepsilon) ,  (\Bar{b-\varepsilon}, 0) } \}  $  ). The commutativity of diagrams in the Definition \ref{interle}  implies the  functoriality of $L$ and the commutativity of diagram in the Theorem \ref{exinterle}.

\end{proof}

\begin{theorem}\label{Kan extension}
If two decomposable extended persistence modules $V$, $W$  are $\varepsilon$- interleaved, then there exists a family $\{V^t| t \in [0, \varepsilon] \}$ of decomposable extended persistence modules such that $V^s, V^t$ are $|s-t|$-interleaved for all $s,t \in [0, \varepsilon]$, $V^0=V$, $V^\varepsilon=W$. 
\end{theorem}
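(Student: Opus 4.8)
The plan is to realize the interpolating family as the slices of a single functor defined on a ``thickened'' index poset, in direct analogy with the interpolation lemma for ordinary persistence modules (\cite{chazal2016structure}, Section 4): the poset $E\times\{0,\varepsilon\}$ of Theorem \ref{exinterle} plays the role of the two endpoints, the translation $\Omega_\varepsilon$ encodes the twist intrinsic to the extended setting, and the desired modules $V^t$ are obtained by Kan extension followed by restriction to the slices $E\times\{t\}$.

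First I would enlarge the index poset. On $E\times[0,\varepsilon]$ declare $(x,a)\le(y,b)$ whenever (1) $x,y\neq\infty$ and $\Omega_{|a-b|}(x)\le y$, or (2) $x\in\mathbb{R}$ and $y=\infty$, or (3) $x=\infty$ and $y\in\mathbb{R}^o$; restricting to $E\times\{0,\varepsilon\}$ recovers the poset appearing in Theorem \ref{exinterle}. The only nonformal point is transitivity, which I would deduce from the triangle inequality $|a-c|\le|a-b|+|b-c|$ together with the monotonicity of $r\mapsto\Omega_r(z)$ for fixed $z\in\mathbb{R}\cup\mathbb{R}^o$: in case (1) one chains $\Omega_{|a-c|}(x)\le\Omega_{|b-c|}(\Omega_{|a-b|}(x))\le\Omega_{|b-c|}(y)\le z$, and the cases involving $\infty$ are immediate. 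I would also record the elementary observation that for $s\le t$ in $[0,\varepsilon]$ the assignment $(x,s)\mapsto(x,0)$, $(x,t)\mapsto(x,t-s)$ is an isomorphism from the full subposet on $E\times\{s,t\}$ onto the poset $E\times\{0,t-s\}$ of Theorem \ref{exinterle} (with $\varepsilon$ replaced by $t-s$).

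Next, by Theorem \ref{exinterle} the $\varepsilon$-interleaving of $V$ and $W$ is the same datum as a functor $L:E\times\{0,\varepsilon\}\to\mbox{vect}_k$ restricting to $V$ on $E\times\{0\}$ and to $W$ on $E\times\{\varepsilon\}$. Let $\iota:E\times\{0,\varepsilon\}\hookrightarrow E\times[0,\varepsilon]$ be the inclusion and set $\widetilde{L}:=\mathrm{Lan}_\iota L$, the left Kan extension, whose value at $p$ is the colimit of $L$ over $\{q:\iota(q)\le p\}$. Since $\iota$ is fully faithful, $\widetilde{L}$ genuinely extends $L$; so, once $\widetilde L$ is known to be pointwise finite dimensional (see below), $V^t:=\widetilde{L}|_{E\times\{t\}}$ (viewed as a functor $E\to\mbox{vect}_k$ via $E\cong E\times\{t\}$) is an extended persistence module with $V^0=V$ and $V^\varepsilon=W$. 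For any $s\le t$, transporting $\widetilde{L}|_{E\times\{s,t\}}$ along the poset isomorphism of the previous paragraph yields a functor on $E\times\{0,t-s\}$ extending $V^s$ and $V^t$, so the ``if'' direction of Theorem \ref{exinterle} (parameter $t-s$) shows $V^s$ and $V^t$ are $|s-t|$-interleaved.

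The substantive work, and the step I expect to be the main obstacle, is checking that each slice $V^t$ is \emph{decomposable}, i.e.\ satisfies (i) and (ii) of Proposition \ref{decomposability}. Here I would use that $V$ and $W$, being finite direct sums of interval modules over $E\cong\mathbb{R}$, have only finitely many interval endpoints (``breakpoints''): below the least breakpoint each is a fixed finite-dimensional space with all internal maps isomorphisms, and above some common real $a_0$ (and on a common tail of $\mathbb{R}^o$) the same holds. For pointwise finiteness I would unwind the colimit: for $x\in\mathbb{R}$ the indexing poset of $\widetilde L(x,t)$ has exactly two maximal elements, $(x-t,0)$ and $(x-(\varepsilon-t),\varepsilon)$, and is the union of their down-sets, so $\widetilde L(x,t)$ is the pushout of $V_{x-t}$ and $W_{x-(\varepsilon-t)}$ over the colimit indexed by the elements below both; that subposet again has two maximal elements, strictly lower in each sheet, and iterating drives these positions to $-\infty$, past all breakpoints, where the colimit stabilizes. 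Thus $\widetilde L(x,t)$ is a finite iterated pushout of finite-dimensional spaces; the cases $x=\infty$ and $x\in\mathbb{R}^o$ are analogous (for $x=\infty$ the indexing poset is filtered and cofinally contained in the range where the interleaving maps are isomorphisms, giving $\widetilde L(\infty,t)\cong V_\infty$). For local constancy near $\infty$, the key point is that the interleaving identities $\psi_{a+\varepsilon}\varphi_a=V_{a,a+2\varepsilon}$, $\varphi_{a+\varepsilon}\psi_a=W_{a,a+2\varepsilon}$ and their $\mathbb{R}^o$-analogues force $\varphi$ and $\psi$ to be isomorphisms whenever their arguments lie in the locally constant ranges of $V$ and $W$, so the pushout tower above consists entirely of isomorphisms there and $V^t$ is locally constant on a neighbourhood of $\infty$. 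Proposition \ref{decomposability} then gives decomposability of $V^t$ and completes the proof; I expect these two colimit computations — controlling the pushout tower toward $-\infty$ and toward $\infty$ — to be the delicate part, but they are close parallels of the corresponding computations in \cite{chazal2016structure}.
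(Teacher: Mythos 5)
Your proof follows the paper's approach exactly: the same enlarged index poset $E\times[0,\varepsilon]$, the same translation of the $\varepsilon$-interleaving into a functor $L$ on $E\times\{0,\varepsilon\}$ via Theorem \ref{exinterle}, the same left Kan extension along the inclusion, and the same appeal to Proposition \ref{decomposability} to conclude that the slices are decomposable. You do supply detail where the paper is terse or slightly imprecise: you verify transitivity of the order on $E\times[0,\varepsilon]$, you explicitly justify that $V^s$ and $V^t$ are $|s-t|$-interleaved by transporting along the poset isomorphism $E\times\{s,t\}\cong E\times\{0,t-s\}$ and reapplying Theorem \ref{exinterle} (the paper leaves this step implicit), and you correctly treat $\widetilde L(\infty,t)$ as a filtered colimit, whereas the paper's displayed two-maxima formula does not literally apply at $e=\infty$ since $\Omega_b^{-1}(\infty)$ is undefined. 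One remark: your iterated-pushout-tower argument for pointwise finite dimensionality is heavier than needed; since every element of the indexing poset below $(x,t)$ with $x\in\mathbb{R}$ lies below one of the two maximal elements, the colimit is already a quotient of $V_{x-t}\oplus W_{x-(\varepsilon-t)}$, so finite dimensionality follows in one step (and this is in effect what the paper's explicit formula for the Kan extension records).
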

\begin{proof}
 We can make $ E\times[0,\varepsilon]$ a poset by setting $(x, a) \leq (y, b)$ if one of the following holds:
\par(1)  $x \leq y$, $a=b$ 
\par(2)   $\Omega_{|a-b|}(x)  \leq  y$, $ a\neq b$ ,  $x\neq \infty $ \par(3) $x=\infty $, $y \in \mathbb{R}^o$, $ a\neq b$.

A family of persistence module $V^t$ is found for the follwing functor extension problem has a solution
\par
\begin{tikzcd}
                                                           & Vect                                                    \\
E\times \{0, \varepsilon\} \arrow[r, "i"] \arrow[ru, "L"] & {{E\times[0, \varepsilon]}} \arrow[u, "H", dotted]
\end{tikzcd}
\par
Here the functor $i$ is the identity embedding. Since small category $E\times \{0, \varepsilon\}$  is a full subcategory of $E\times[0,\varepsilon]$ , and the category $Vect$ contains all colimits, the problem is solved by taking a left Kan extension $H$. For any $(e,b)\in E \times [0,\varepsilon] $, 
\[
H(e,b) :=\displaystyle\lim_{i(x, t) \rightarrow (e, b)}L((x, t))=  \begin{cases}
    L((e, b))& (e,b)\in E\times \{0, \varepsilon\}  \\
    L((\Omega^{-1}_b (e), 0)\bigoplus  L(\Omega^{-1}_{\varepsilon-b} (e), \varepsilon)) /G& others
    
    \end{cases}
    \]
    Where G is a subgroup generated by the following element :
    $$\{(L_{(\Omega^{-1}_\varepsilon (\Omega^{-1}_b (e)), \varepsilon),(\Omega^{-1}_b (e), 0) }(x),  -L_{(\Omega^{-1}_\varepsilon (\Omega^{-1}_b (e)), \varepsilon),(\Omega^{-1}_{\varepsilon-b} (e), \varepsilon) }(x)) \arrowvert x\in L_{(\Omega^{-1}_\varepsilon (\Omega^{-1}_b (e)), \varepsilon)}\}$$
    $$\{(L_{(\Omega^{-1}_\varepsilon (\Omega^{-1}_{\varepsilon -b} (e)), 0),(\Omega^{-1}_b (e), 0) }(y),  -L_{(\Omega^{-1}_\varepsilon (\Omega^{-1}_{\varepsilon -b} (e)), 0),(\Omega^{-1}_{\varepsilon-b} (e), \varepsilon) }(y)) \arrowvert y \in L_{(\Omega^{-1}_\varepsilon (\Omega^{-1}_{\varepsilon -b} (e)), 0)}\}$$
    
 We  denote the morphism from $H((x, a))$  to $ H((y, b))$ as $H_{(x, a) ,  (y, b)} $  for any $(x, a) $, $ (y, b) \in   E \times [0,\varepsilon]$.  So $V^t(e):=H((e,t))$ has finite dimension, and has local constancy near $\infty$ which means there exist $a \in \mathbb{R}$  $b\in \mathbb{R}$, such that $V^t_{l,h}:=H_{(l,t),(h,t)}$   is an  isomorphism for all $a<l<h<\overline{b}$  for the image of L  has finite dimension, and has local constancy near $\infty$. Then by Proposition \ref{decomposability}  $\{V^t| t \in [0, \varepsilon] \}$ is  a family of decomposable extended persistence modules.

\end{proof}

\begin{theorem}\label{final stablity}

If two decomposable extended persistence module $V$, $W$ are  $\varepsilon$ interleaved  then $d_B(V^0,V^\varepsilon) \leq \varepsilon$
\end{theorem}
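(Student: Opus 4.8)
The plan is to obtain this theorem as an immediate corollary of the two preceding results, with no new argument required. First I would invoke Theorem \ref{Kan extension}: since $V$ and $W$ are decomposable and $\varepsilon$-interleaved, it produces a family $\{V^t \mid t \in [0,\varepsilon]\}$ of decomposable extended persistence modules with $V^0 = V$, $V^\varepsilon = W$, such that $V^s$ and $V^t$ are $|s-t|$-interleaved for every $s,t \in [0,\varepsilon]$. This is exactly the hypothesis of Lemma \ref{Stability}.

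Then I would apply Lemma \ref{Stability} to this family, which gives $d_B(V^0, V^\varepsilon) \le \varepsilon$. Under the identifications $V^0 = V$ and $V^\varepsilon = W$ furnished by Theorem \ref{Kan extension}, this reads $d_B(V,W) \le \varepsilon$, which is the assertion. The one point worth making explicit is that all objects involved are decomposable, so that $\mathrm{Ord}$, $\mathrm{Rel}$, $\mathrm{Ext}$ and hence $d_B$ are well-defined for every $V^t$; this is guaranteed because Theorem \ref{Kan extension} asserts decomposability of the whole family (it checks pointwise finite-dimensionality and local constancy near $\infty$ via Proposition \ref{decomposability}).

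Internally, Lemma \ref{Stability} already carries out the real work: Lemma \ref{measure} supplies the interleaving-to-rectangle-measure inequalities $u^*_{V^s}(T) \le u^*_{V^t}(T^{|s-t|})$ on each of the three planes $\mathbb{R}\times\mathbb{R}$, $\mathbb{R}\times\mathbb{R}^o$, $\mathbb{R}^o\times\mathbb{R}^o$, and then the matchings $\phi_O$, $\phi_R$, $\phi_E$ are extracted plane-by-plane by Theorem 5.29 of \cite{chazal2016structure}, with $\mathcal{D}$ taken to be the open half-plane above the diagonal for the ordinary and relative diagrams and the entire plane for the extended diagram (the latter choice being what forces $\phi_E$ to be perfect, matching the requirement in the definition of $d_B$). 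Since all of this is in place, nothing further is needed.

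The main obstacle here is therefore not mathematical but organizational: one must only verify that the hypotheses line up verbatim — that "decomposable and $\varepsilon$-interleaved" feeds Theorem \ref{Kan extension}, that its output feeds Lemma \ref{Stability} without gap, and that $d_B(V,W)$ is literally $d_B(V^0,V^\varepsilon)$. Once Theorem \ref{Kan extension} and Lemma \ref{Stability} are granted, the proof is two sentences long.
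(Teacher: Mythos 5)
Your proposal is correct and matches the paper's argument exactly: the paper also deduces this theorem directly as a corollary of Theorem \ref{Kan extension} (which produces the interpolating family of decomposable, mutually $|s-t|$-interleaved modules) and Lemma \ref{Stability} (which converts such a family into the bottleneck bound). Your additional remarks on why each hypothesis is satisfied and where $\phi_O$, $\phi_R$, $\phi_E$ come from are accurate elaborations of those two cited results rather than a deviation from the paper's route.
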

Theorem \ref{final stablity} is a corollary of theorem \ref{Stability} and theorem \ref{Kan extension}

\subsection{Stability for Persistent Path Homology}
We shall need the following definition of relative path homology of digraphs:
Let $G=(X,E),G'=(X,E')$ be two graphs on the same vertex set $X$ such that $E'\subset E$, then we have $\mathcal{A}_\ast(G') \subset \mathcal{A}_\ast(G) \subset \mathcal{R}_\ast(X)$. Define the \textit{relative path homology group} of $(G,G')$ as $H_p(G,G')=H^{sup}_p(\mathcal{A}_\ast(G),\mathcal{A}_\ast(G');\mathcal{R}_\ast(X))$.

For digraphs, a (continuous) persistent module usually arises from a weight function. We define a \textit{weighted digraph} as a triple $D= (X , E , A) $ where $X$ is a finite set  with a weight function   $ A: E\to \mathbb{R}  ,  E\subseteq X\times X-\{(x , x) \arrowvert x\in X\}  $.
Let $ A,  A'  $ be two weight functions on E, we define the \textit{distance} between them as $$d_E( A,  A')=max_{(x,y )\in E} \arrowvert  A(x,y)-  A'(x,y)\arrowvert$$ Given a weighted digraph  $D=(X, E,A)  $  and  $ a\in \mathbb{R}   $, define digraphs 
  $G_{a}=(X,E_{a})  $,  $G_{\overline{a}}=(X,E_{\overline{a}})  $  by $$E_{a}:=\{(x,y)\in E : A(x,y)\leq a \}$$ $$E_{\overline{a}}:=\{(x,y)\in E : A(x,y)\geq a \}$$   For any  $ a'\geq a \in \mathbb{R}   $, we have  natural inclusions  $ E_{a}\hookrightarrow E_{a'}   $,  $ E_{\overline{a'}}\hookrightarrow E_{\overline{a}}   $.  The nested families of sub-digraphs $\{G_{a} \}$, $\{G_{\overline{a}} \}$ of $G = (X , E)$ induces nested families of subgroups $\mathcal{A}_*( G_{a})$, $\mathcal{A}_*( G_{\overline{a}})$ of $\mathcal{R}_*(X) $ (see Section 2.2.2 ). Denote the supremum complexes of $ \mathcal{A}_*( G_{*}) $ as $  \mathcal{S}_*( G_{*}) $. Then we have nested families of subcomplexs $\mathcal{S}_*( G_{a})$, $\mathcal{S}_*( G_{\overline{a}})$ of $\mathcal{R}_*(X) $ .Thus for each $p$,  we get a extended persistence module  $V^D_p $ by defining $$V^D_p (a):=H_p(\mathcal{S}_*( G_{a}))=H^{sup}_p(\mathcal{A}_\ast(G_{a});\mathcal{R}_\ast(X))=H_p(G_{a})$$
$$V^D_p (\infty) :=H_p(\mathcal{S}_*( G))=H^{sup}_p(\mathcal{A}_\ast(G);\mathcal{R}_\ast(X))=H_p(G)$$
$$V^D_p (\overline{a}):=H_p(\mathcal{S}_*( G)/\mathcal{S}_*( G_{\overline{a}}))=H^{sup}_p(\mathcal{A}_\ast(G) , \mathcal{A}_\ast(G_{\overline{a}});\mathcal{R}_\ast(X))=H_p(G , G_{\overline{a}})$$ 
for  $a \in \mathbb{R}    $   and $\overline{a} \in \mathbb{R}^o$, and joining them by homomorphisms induced by inclusions. 
\par
Finiteness of $X$ implies that $\mathcal{R}_p (X)$ and $\mathcal{S}_p( G)$ are finite dimensional for all $p\in \mathbb{N}$. Thus $V^D_p$ is finite dimensional everywhere. For sufficiently large $a \in \mathbb{R}$,  $b\in \mathbb{R}$, we have $ G_{a}=G$,  $ G_{\overline{b}}=(X ,\varnothing)$. Thus $(V^D_p)_{s, t}$  is an isomorphism for $a <s<t<\overline{b}$. By Proposition \ref{decomposability}, $V^D_p$ is decomosable for any weighted graph $D$ and any $p\in \mathbb{N} $. 

We are now able to formulate and prove the stability of extended persistent path homology.
\begin{theorem}\label{Stability-weighted digraph}
Let   $ D=(X, E,A^0)  $,  $ D'=(X,  E , A^\delta)  $ be two weighted digraphs with $d_E( A^0,  A^\delta)=\delta$. Then$$d_B(\text{Dgm}_p(V^D_p),\text{Dgm}_p(V_p^{D'}))\leq d_E( A^0,  A^\delta)$$
for any $p\in \mathbb{Z}_+  $
\end{theorem}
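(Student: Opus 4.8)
The plan is to reduce this to the abstract stability statement of Theorem~\ref{final stablity} by exhibiting, for each $p$, a $\delta$-interleaving between the decomposable extended persistence modules $V^D_p$ and $V^{D'}_p$. Once such an interleaving is produced, Theorem~\ref{final stablity} immediately yields $d_B(V^D_p, V^{D'}_p) \leq \delta = d_E(A^0, A^\delta)$, which is the desired bound (note $\text{Dgm}_p$ here denotes the triple of ordinary, relative and extended diagrams, and $d_B$ is the bottleneck distance defined via $\delta$-matchings, which Theorem~\ref{final stablity} controls). So the whole task is the construction of the interleaving maps and verification of Definition~\ref{interle}.

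First I would record the key combinatorial fact linking the two weight functions: since $|A^0(x,y) - A^\delta(x,y)| \leq \delta$ for every edge $(x,y) \in E$, we have inclusions of sub-digraphs $G^0_a \hookrightarrow G^\delta_{a+\delta}$ and $G^\delta_a \hookrightarrow G^0_{a+\delta}$ for sublevel sets, and dually $G^0_{\overline{b}} \hookrightarrow G^\delta_{\overline{b-\delta}}$, $G^\delta_{\overline{b}} \hookrightarrow G^0_{\overline{b-\delta}}$ for superlevel sets, for all $a, b \in \mathbb{R}$ (here superscripts indicate which weight function is used). Each such inclusion of digraphs induces an inclusion of the allowed-path subgroups $\mathcal{A}_\ast$, hence (by the naturality of $H^{sup}$ with respect to inclusions of graded subgroups, established in Section~\ref{Graded Subgroups of a Chain Complex}, and the naturality of relative supremum homology noted in Section~\ref{Extended PH for Graded Subgroups}) a linear map on homology. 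Concretely: $\varphi_a : H_p(G^0_a) \to H_p(G^\delta_{a+\delta})$ and $\psi_a : H_p(G^\delta_a) \to H_p(G^0_{a+\delta})$ in the ordinary range, and $\varphi_{\overline{b}} : H_p(G^0, G^0_{\overline{b}}) \to H_p(G^\delta, G^\delta_{\overline{b-\delta}})$ together with its $\psi$-counterpart in the relative range. At the apex $\infty$, the identity $H_p(G^0) = H_p(\mathcal{S}_\ast(G)) = H_p(G^\delta)$ holds since both digraphs have the same edge set $E$, so the relevant maps there are identities.

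The verification of conditions (i), (i'), (ii), (ii') of Definition~\ref{interle} is then a routine diagram-chase: all the maps in sight are induced by inclusions of digraphs (equivalently, of allowed-path graded subgroups, equivalently of their supremum/cone subcomplexes), and every required square or triangle commutes simply because the underlying inclusions of digraphs compose correctly --- e.g.\ $G^0_a \hookrightarrow G^\delta_{a+\delta} \hookrightarrow G^0_{a+2\delta}$ equals $G^0_a \hookrightarrow G^0_{a+2\delta}$, giving $\psi_{a+\delta} \circ \varphi_a = V^D_{p;a,a+2\delta}$, and similarly for the mixed naturality squares bridging the ordinary and relative parts (which use the inclusion of $\mathcal{A}_\ast(G^0_a)$ into the cone of $\mathcal{A}_\ast(G^0_{\overline{b}})$ inside $\mathcal{A}_\ast(G^0)$, and functoriality of the cone construction from Proposition~\ref{Commute}). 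Decomposability of $V^D_p$ and $V^{D'}_p$ was already established in the paragraph preceding the theorem, so Theorem~\ref{final stablity} applies.

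The main obstacle, I expect, is not any single hard step but rather the bookkeeping of the superlevel (relative) half of the interleaving: one must be careful that the direction of the inclusions reverses ($G_{\overline{b}}$ grows as $b$ decreases), that this matches the reversed order on $\mathbb{R}^o$ built into $E$, and that the shift $\overline{b} \mapsto \overline{b-\delta}$ in Definition~\ref{interle} is the correct one given $d_E(A^0, A^\delta) = \delta$; getting a sign or a direction wrong here would break the naturality squares. A secondary subtlety is checking that the maps $\varphi_{\overline{b}}, \psi_{\overline{b}}$ on relative supremum homology are genuinely well-defined and natural --- this is where one leans on the Five-Lemma isomorphism $H^{sup}_p(D_\ast, E_\ast) \cong H^{inf}_p(D_\ast, E_\ast)$ and on the commutation of cones with supremum subcomplexes (Proposition~\ref{Commute}), so that the relative part is literally the persistent homology of a filtration of cone subcomplexes to which ordinary naturality applies.
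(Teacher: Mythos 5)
Your proposal follows the same route as the paper: observe the four inclusions of sub-digraphs $G^0_a \hookrightarrow G^\delta_{a+\delta}$, $G^\delta_a \hookrightarrow G^0_{a+\delta}$ (and their superlevel duals) coming from $d_E(A^0,A^\delta)=\delta$, use naturality of $H^{sup}$ (equivalently, commutative diagrams of supremum complexes and their quotients) to turn these into the quadruple of maps of Definition~\ref{interle}, verify commutativity by composing inclusions, and invoke decomposability plus Theorem~\ref{final stablity}. This matches the paper's proof essentially step for step; the only cosmetic difference is that the paper works directly with quotient chain complexes $\mathcal{S}_\ast(G)/\mathcal{S}_\ast(G_{\overline{b}})$ rather than routing the relative side through the cone construction, but the two are interchangeable here.
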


\begin{proof}
For any $h\in \{0,\delta\}$, 
  $ a\in \mathbb{R}   $, define $E^h_{a} $, $E^h_{\overline{a}}$ by
\par
 $$E^h_{a}:=\{(x,y)\in E : A^h(x,y)\leq a \}$$$$E^h_{\overline{a}}:=\{(x,y)\in E : A^h(x,y)\geq a \}$$   and define  $G^h_{a}$,  $G^h_{\overline{a}}$ by $ (X, E^h_{a})$, $(X, E^h_{\overline{a}})$ respectively.

 Since $d_E( A^0,  A^{\delta}) =\delta$,  we have the following natural inclusions of digraphs .\\
\[
G^0_{a}\hookrightarrow  G^{\delta}_{a+\delta}   ,
  G^0_{\overline{a}} \hookrightarrow  G^{\delta}_{\overline{a-\delta}}  , 
  G^{\delta}_{a}\hookrightarrow G^0_{a+\delta}   , 
G^{\delta}_{\overline{a}}\hookrightarrow G^0_{\overline{a-\delta}}     \tag{$*$}
\]
   
  Consider now the following commutative diagrams of chain complexes, where all arrows are induced by inclusion or natural projection (the slant arrows are justified by  $( * )$ ):

\par
(i)For any  $ a<a', b<b'\in \mathbb{R}   $, we have diagrams:

\begin{tikzcd}
\mathcal{S}_*( G^0_{a}) \arrow[r] \arrow[rd] \arrow[r] & \mathcal{S}_*( G^0_{a'}) \arrow[rd]       &                                  \\
                                                       & \mathcal{S}_*(  G^{\delta}_{a+\delta}) \arrow[r] & \mathcal{S}_*(  G^{\delta}_{a'+\delta})
\end{tikzcd}
\par
\begin{tikzcd}
\mathcal{S}_*( G^0_{a}) \arrow[r] \arrow[rd] \arrow[r] & \mathcal{S}_*( G)/\mathcal{S}_*( G^0_{\overline{b}}) \arrow[rd] &                                                                  \\
                                                       & \mathcal{S}_*(  G^{\delta}_{a+\delta}) \arrow[r]                     & \mathcal{S}_*( G)/\mathcal{S}_*(  G^{\delta}_{\overline{b-\delta}})
\end{tikzcd}

\begin{tikzcd}
\mathcal{S}_*( G)/\mathcal{S}_*( G^0_{\overline{b'}}) \arrow[r] \arrow[rd] & \mathcal{S}_*( G)/\mathcal{S}_*( G^0_{\overline{b}}) \arrow[rd]         &                                                             \\
                                                                           & \mathcal{S}_*( G)/\mathcal{S}_*(  G^{\delta}_{\overline{b'-\delta}}) \arrow[r] & \mathcal{S}_*(G)/\mathcal{S}_*(  G^{\delta}_{\overline{b-\delta}})
\end{tikzcd}
\par(i') Same diagrams as (i) but with $s, t$ reversed.
\par(ii)For any  $ a, b\in \mathbb{R}   $, we have diagrams:

\begin{tikzcd}
\mathcal{S}_*( G^0_{a}) \arrow[rr] \arrow[rd]                              &                                                                            & \mathcal{S}_*( G^0_{a+2\delta})                              \\
                                                                           & \mathcal{S}_*(  G^{\delta}_{a+\delta}) \arrow[ru]                               &                                                              \\
\mathcal{S}_*( G)/\mathcal{S}_*( G^0_{\overline{b}}) \arrow[rr] \arrow[rd] &                                                                            & \mathcal{S}_*( G)/\mathcal{S}_*( G^0_{\overline{b-2\delta}}) \\
                                                                           & \mathcal{S}_*( G)/\mathcal{S}_*( G^{\delta}_{\overline{b-\delta}}) \arrow[ru] &                                                             
\end{tikzcd} 
\par (ii') Same diagrams as (ii) but with $0, \delta$ reversed.

\par Passing to homology, the above diagrams constitute a $  \delta $-interleaving between $V^{D}_p $ and $V^{D'}_p$. Since all of these modules are decomposable, we have  $d_B(\text{Dgm}_p(V^{D}_p),\text{Dgm}_p(V^{D'}_p)\leq \delta = d_E(A,A') $ by Theorem  \ref{final stablity}.

\end{proof}
\subsection{Stability for Persistent Homology of Hypergraphs}
We shall need the following definition of relative embedded homology for hypergraph pairs. Let $V$ be a finite set and $\mathcal{H'}\subseteq \mathcal{H}$ be  hypergraphs on $V$. Then we have $\Delta_* (\mathcal{H'}) \subseteq \Delta_* (\mathcal{H}) \subseteq\Delta_* (K_{\mathcal{H}})$. Define the relative embedded homology group of $( \mathcal{H}, \mathcal{H'})$ as $H_p ( \mathcal{H}, \mathcal{H'})=H^{sup}_p(\Delta_* (\mathcal{H}), \Delta_* (\mathcal{H'});\Delta_* (K_{\mathcal{H}}))$ (see section \ref{Extended PH for Graded Subgroups}). Note that we could replace $K_{\mathcal{H}}$ by any larger simplicial complex. In particular, an inclusion of hypergraph pairs induces a well-defined homomorphism between their homology groups. 
\par
 Now suppose $\mathcal{H}$ is a hypergraph defined on a finite set $V$, let $f$ be a real valued function on $\mathcal{H}$.
For each $a \in \mathbb{R}$, let
$$\mathcal{H}^f_a:=f^{-1}((-\infty, a])$$
$$\mathcal{H}^f_{\bar{a}}:=f^{-1}([ a, +\infty))$$

 For each $p\in \mathbb{N}$,  we define a extended persistence module  $V^f_p $ by defining 
 
 $$V^f_p (a):=H_p(\mathcal{H}^f_a)=H^{sup}_p(\Delta_* (\mathcal{H}^f_a);\Delta_* (K_{\mathcal{H}}))$$ $$ V^f_p (\infty):=H_p(\mathcal{H})=H^{sup}_p(\Delta_* (\mathcal{H});\Delta_*(K_{\mathcal{H}}))$$
 $$V^f_p (\bar{a}):=H_p(\mathcal{H}, \mathcal{H}^f_{\bar{a}})=H^{sup}_p(\Delta_* (\mathcal{H}), \Delta_* (\mathcal{H}^f_{\bar{a}});\Delta_* (K_{\mathcal{H}}))$$for  any $a \in \mathbb{R}    $   and $\overline{a} \in \mathbb{R}^o$, and joining them by homomorphisms induced by inclusions.  
Suppose $f$, $g$ are two real valued functions on $\mathcal{H}$. Define the $L^{\infty}$ distance between $f$ and $g$  by 
$$\Arrowvert f-g\Arrowvert_{\infty}=\sup_{\sigma \in \mathcal{H}} \arrowvert f(\sigma)-g(\sigma)\arrowvert $$
We are now able to formulate and prove the stability of extended persistent embedded homology of hypergraph. 
\begin{theorem}\label{stability-hypergraph}

Let $f$, $g$ be two real valued functions on $\mathcal{H}$. Then$$d_B(\text{Dgm}_p(V^{f}_p),\text{Dgm}_p(V^{g}_p)) \leq \Arrowvert f-g\Arrowvert_{\infty}$$
\end{theorem}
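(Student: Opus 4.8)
The plan is to reproduce, in the hypergraph setting, the argument used for Theorem~\ref{Stability-weighted digraph}: build an explicit $\delta$-interleaving between $V^f_p$ and $V^g_p$ out of inclusions of sub-hypergraphs, pass to homology, and then invoke Theorem~\ref{final stablity}. Write $\delta=\|f-g\|_\infty$. Everything is driven by the elementary observation that $f(\sigma)\le a$ forces $g(\sigma)\le f(\sigma)+\delta\le a+\delta$, and $f(\sigma)\ge a$ forces $g(\sigma)\ge a-\delta$, and symmetrically with the roles of $f$ and $g$ exchanged; consequently, for every $a\in\mathbb{R}$ we obtain inclusions of hypergraphs
\[
\mathcal{H}^f_a\subseteq\mathcal{H}^g_{a+\delta},\quad \mathcal{H}^g_a\subseteq\mathcal{H}^f_{a+\delta},\quad \mathcal{H}^f_{\bar a}\subseteq\mathcal{H}^g_{\overline{a-\delta}},\quad \mathcal{H}^g_{\bar a}\subseteq\mathcal{H}^f_{\overline{a-\delta}},
\]
which are the hypergraph analogues of the digraph inclusions $(*)$ from the proof of Theorem~\ref{Stability-weighted digraph}.

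Next I would lift these to chain-level maps. One fixes $\Delta_\ast(K_{\mathcal{H}})$ as the common ambient chain complex for all the sub-hypergraphs above -- legitimate by Remark~\ref{RemarkGradedSubgroups}, and consistent with how $V^f_p$ and $V^g_p$ were defined. An inclusion such as $\mathcal{H}^f_a\subseteq\mathcal{H}^g_{a+\delta}$ induces an inclusion of graded subgroups $\Delta_\ast(\mathcal{H}^f_a)\subseteq\Delta_\ast(\mathcal{H}^g_{a+\delta})$ of $\Delta_\ast(K_{\mathcal{H}})$, hence, by the naturality of the supremum construction recorded in Section~\ref{Graded Subgroups of a Chain Complex}, a chain map of supremum complexes $\mathcal{S}_\ast(\mathcal{H}^f_a)\hookrightarrow\mathcal{S}_\ast(\mathcal{H}^g_{a+\delta})$; similarly $\mathcal{S}_\ast(\mathcal{H}^f_{\bar a})\subseteq\mathcal{S}_\ast(\mathcal{H})$, so that $H_p(\mathcal{H},\mathcal{H}^f_{\bar a})=H_p\big(\mathcal{S}_\ast(\mathcal{H})/\mathcal{S}_\ast(\mathcal{H}^f_{\bar a})\big)$ and the inclusions induce natural projections of quotient complexes. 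Assembling these, I would write down the same pattern of commutative squares and triangles as diagrams (i), (i'), (ii), (ii') in the proof of Theorem~\ref{Stability-weighted digraph}, with $\mathcal{S}_\ast(G^h_\ast)$ replaced throughout by $\mathcal{S}_\ast(\mathcal{H}^h_\ast)$ for $h\in\{f,g\}$; each square commutes because all of its arrows are induced by one and the same underlying inclusion of hyperedge sets.

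Passing to homology then produces, directly via Definition~\ref{interle}, a $\delta$-interleaving of the extended persistence modules $V^f_p$ and $V^g_p$. Both are decomposable: finiteness of the vertex set makes each $\Delta_p(K_{\mathcal{H}})$, and hence every term of either module, finite dimensional; and for $a$ large enough $\mathcal{H}^f_a=\mathcal{H}=\mathcal{H}^g_a$ while for $b$ large enough $\mathcal{H}^f_{\bar b}=\varnothing=\mathcal{H}^g_{\bar b}$ (so the relative groups there are isomorphic to $H_p(\mathcal{H})$), giving local constancy near $\infty$; thus Proposition~\ref{decomposability} applies. Theorem~\ref{final stablity} now gives $d_B(\text{Dgm}_p(V^f_p),\text{Dgm}_p(V^g_p))\le\delta=\|f-g\|_\infty$, which is the assertion.

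The diagram chase is routine bookkeeping. The one step that needs real care -- and the only conceptual obstacle -- is the middle one: verifying that ``pass to the supremum complex'' is functorial with respect to inclusions of sub-hypergraphs once every supremum complex is computed inside the single fixed ambient complex $\Delta_\ast(K_{\mathcal{H}})$, so that the interleaving squares genuinely commute. This is exactly what Remark~\ref{RemarkGradedSubgroups} and the naturality square of Section~\ref{Graded Subgroups of a Chain Complex} are there to guarantee, and it plays the part here that the inclusions $(*)$ play in the digraph proof.
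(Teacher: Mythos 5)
Your proposal is correct and follows essentially the same approach as the paper's sketch: write down the four inclusions of sub-hypergraphs playing the role of $(*)$, lift them to maps of supremum complexes inside the common ambient complex $\Delta_\ast(K_{\mathcal{H}})$, pass to homology to get a $\delta$-interleaving, check decomposability via Proposition~\ref{decomposability}, and conclude with Theorem~\ref{final stablity}. You fill in more of the bookkeeping than the paper's (explicitly labeled) sketch, but there is no difference in method or in the key ingredients invoked.
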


\begin{sproof}
Let $\delta= \Arrowvert f-g\Arrowvert_{\infty}$ .  Since we have hypergraphs analogous to $(*)$ in the proof of Theorem \ref{Stability-weighted digraph}.  A $\delta$-interleaving between  $V^{f}_p, V^{g}_p$ is then obtained in the same fashion as in Theorem \ref{Stability-weighted digraph} , for we have diagrams analogous to those in the proof of Theorem  
 \ref{Stability-weighted digraph}. Since these modules are decomposable by Proposition \ref{decomposability}, we have $d_B(\text{Dgm}_p(V^{f}_p),\text{Dgm}_p(V^{g}_p) \leq \delta =\Arrowvert f-g\Arrowvert_{\infty}$ by Theorem \ref{final stablity}.
\end{sproof}

 \bibliographystyle{plain}
 \bibliography{DwindledBible}
 
\end{document}